\newcommand{\gaul}{(\bZ +i\bZ )^d}
\newcommand{\duall}{\Lambda ^{\circ}}
\newcommand{\eps}{\epsilon}
\newcommand{\ccG}{\mathcal G}
\newcommand{\BF}{Bargmann-Fock space}
\newcommand{\bfd}{\cF _d}
\newcommand{\bC}{\mathbb C}
\newcommand{\bR}{\mathbb R}
\newcommand{\bZ}{\mathbb Z}
\newcommand{\bN}{\mathbb N}
\newcommand{\cd}{{\mathbb C}^d}
\def\intrd{\int_{\rd}}
\def\intrd{\int_{\rd}}
\newcommand{\cF}{\mathcal F}
\newcommand{\cE}{\mathcal E}
\newcommand{\cD}{\mathcal D}
\newcommand{\cZ}{\mathcal Z}
\newcommand{\cG}{\mathcal G}
\newcommand{\rd}{\bR^d}
\newcommand{\rdd}{\bR^{2d}}
\newcommand{\inv}{^{-1}}
\newcommand{\rems}{Rmarks}
\newcommand{\bba}{\mathbf a}
\newcommand{\bbb}{\mathbf b}
\newcommand{\mbZ}{\,\mathbb Z [i]}
\newcommand{\fif}{if and only if}
\def\lrd{L^2(\rd)}
\newcommand{\vs}{\bigskip}
\newtheorem{tm}{Theorem}[section]    % theorem like environments
\newtheorem{lemma}[tm]{Lemma}
\newtheorem{prop}[tm]{Proposition}
\newtheorem{cor}[tm]{Corollary}
\newcommand{\tfs}{time-frequency shift}
\newcommand{\beq}{\begin{equation}}
\newcommand{\eeq}{\end{equation}}
\begin{document}
\begin{abstract}
We give a general construction of entire functions in $d$ complex
variables that vanish on a lattice of the form $\Lambda = A (\bZ +
i\bZ )^d$ for an invertible complex-valued matrix. As an application
we exhibit a class of lattices of density $>1$ that fail to be a sampling set for the
Bargmann-Fock space in $\bC ^2$. By using an equivalent real-variable
formulation, we show that these lattices fail to generate a Gabor
frame. 
\end{abstract}

\title[Lattice Sampling of Entire Functions]{
Sampling of Entire Functions of Several Complex Variables on a Lattice and 
  Multivariate Gabor Frames }
\author{Karlheinz Gr\"ochenig}
\address{Faculty of Mathematics \\
University of Vienna \\
Oskar-Morgenstern-Platz 1\\
A-1090 Vienna, Austria}
\email{karlheinz.groechenig@univie.ac.at}
 \author{Yurii Lyubarskii}
\address{Department of Mathematical Sciences, Norwegian University of
 Science and Technology, NO-7491, Trondheim, Norway} 
\email{yura@math.ntnu.no} 
\subjclass[2010]{42C15, 33C90, 32A30, 94A12}
\date{}
\keywords{Gabor frame, Gauss function, lattice, Weierstrass
  sigma-function, interpolating function,  entire functions of several variables}
\thanks{K.\ G.\ was
  supported in part by the  project 31887-N32 the
Austrian Science Fund (FWF)}
\maketitle

\section{Introduction}

We study the sampling problem in the Bargmann-Fock space of several
complex variables and the related construction of Gabor frames with a
Gaussian window. Our main point is the restriction to sampling on a
lattice in $\cd$ and the consequences resulting from  the additional
invariance properties. 

The first problem is a sampling problem for entire functions in
several complex variables.  Recall that 
the Bargmann-Fock space $\cF ^2 _d$ consists of all 
entire functions of $d$ complex variables $z= (z_1, \dots, z_d) \in
\cd $ with finite norm  
\begin{equation}
  \label{eq:n1}
\| F\|_{\cF^2_d}^2 =  \int _{\cd } |F(z)|^2 e^{-\pi |z|^2} \, dz . 
\end{equation}
A set  $\Lambda \subseteq \cd $ is called  a {\em sampling set} for
$\cF ^2_d$, if, for some constants $A,B>0$, 
\beq
\label{eq:n2}
A \|F\|_{\cF ^2_d}^2 \leq \sum_{\lambda \in \Lambda} |F(\lambda)|^2 e^{-\pi |\lambda|^2}
 \leq B \|F\|_{\cF _d^2}^2, \quad \forall F\in \cF ^2_d. 
\eeq

Our second question deals with the spanning properties of \tfs s of
the %(normalized)
 Gaussian function $\phi(x)=%2^{d/4}
 \exp(-\pi |x|^2)$, 
 $x\in\rd$, and thus is a  problem for functions on $\rd $.  Let   $\lambda=\xi+i \eta \in \cd$ with  $\xi,\eta \in \rd$   and 
$$
(\pi_\lambda \phi)(x)=%2^{d/4} 
e^{2i\pi <\eta, x>}e^{- \pi |x-\xi|^2}
$$
be the  corresponding \tfs\ of $\phi $ by $\lambda $. 
% Through this article  denotes the standard Gaussian  function and
Given a discrete set $\Lambda \subseteq \cd $, we denote the set of
\tfs s along $\Lambda $ by 
$$
\ccG(\Lambda)= \{\pi_\lambda \phi : \lambda\in \Lambda \} \, ,
$$
which is usually  called a  Gabor family.   We then say that  $\ccG(\Lambda)$ is {\em a frame} for   
$L^2(\rd)$,  if  for some constants $A  , B>0$
\beq
A \|f\|^2_{L^2(\rd)}  \leq \sum_{\lambda\in \Lambda} |\langle f, \pi_\lambda \phi \rangle |^2
\leq B  \|f\|^2_{L^2(\rd)}, \quad \forall  f \in  L^2(\rd).
\eeq
We are interested in the frame property of $\ccG(\Lambda)$ in
$L^2(\rd)$. This is a ``real-variable'' problem about functions in
$\lrd $. 

It is well-known that the Bargmann transform maps the \tfs s $\pi
_\lambda \phi$ to the normalized reproducing kernel of $\cF
^2_d$~\cite{bargmann61}. Therefore these two problems are equivalent
via the Bargmann transform:% following statement, see
% e.g. \cite{G2}: 
 
 {\em The system $\cG(\Lambda)$  forms a frame in $L^2(\bR^d)$,  if
   and only if  $\Lambda$ is sampling in $\cF^2_d$.}
  See Section~\ref{duality1} for a detailed description.

 In  the case of one variable  $d=1$ the sampling property in $\cF
 ^2_1$ and  frame 
property of $\cG (\Lambda )$ can be completely characterized  in  terms of the 
density of $\Lambda $  by the results in \cite{L1, S1,
  S2}. Precisely, a separated
set $\Lambda \subseteq \bC $ is a sampling set for $\cF ^2_1$, \fif\
its  (lower) Beurling $D^-(\Lambda )$ is greater than $ 1$. See  also
the discussion in \cite{G2,G4,He}.
In the multivariate case  the density condition $D^-(\Lambda ) >1$  is 
necessary (see \cite{OS,MP00}), but is far from sufficient. Sufficient conditions in terms of
a covering density are given in~\cite{fg89jfa,MP00}, but they imply a
large  Beurling density. 

For more detailed results,  additional arithmetic conditions are
required. 
For this reason we restrict our attention exclusively to  lattices. 
As in \cite{G1} we consider only complex lattices.  By a complex
lattice we understand 
a   lattice of the form
$$\Lambda = A  (\bZ + i \bZ )^d = A \mbZ ^d
$$
for some $A\in \mathrm{GL}\, (d,\bC )$. Throughout  we will write $\mbZ = \bZ +
i\bZ $ for the ring of Gaussian integers in $\bC$. 
% The volume  of a complex lattice is
% $\mathrm{vol}\, (\Lambda ) = |\det A|^2$, its (Beurling) density is
% $D^-(\Lambda ) =( \mathrm{vol}\, (\Lambda ))^{-1}  = |\det
% A|^{-2}$~\cite{CS96}.    
Since $\Lambda $ is a discrete subgroup of $\cd $, the corresponding
sampling set and the Gabor family $\cG (\Lambda )$ possess %  certain
% invariance properties and
 an additional   structure.

To answer the   question about  Gabor frames over a lattice,
one applies the  fundamental
duality theory~\cite{feichtinger-kozek98,janssen95,ron-shen97} (see also
Section~\ref{duality1} below).  One of the key points is relation between the sampling problem in the space 
$\cF^2_d$ and the uniqueness problem in the space $\cF^\infty_d$,
which  consists of entire functions $F$ on 
$ \cd$ such that 
$$
\|F\|_{\cF^\infty_d}= \sup_{z \in \cd} |F(z)| \, e^{-\frac {\pi |z|^2}2} < \infty.
$$
After a suitable reformulation, we see that the
construction of Gabor frames is  intimately connected to two fundamental problems about
entire functions. 
 
% For a complex lattice $\Lambda $ the sampling property in $\cF ^2_d$ and the
% frame property of $\cG (\Lambda )$ can then be characterized by the
% following theorem.

% Given a lattice $\Lambda = A \mbZ^d$ we denote by $\duall =(A^*)\inv
% \mbZ^d $ its adjoint lattice. It is easy to see that it   can be characterised by the property 
% $\pi_\lambda \pi_\mu=\pi_\mu\pi_\lambda$ for $\lambda\in \Lambda$, $\mu\in \duall$. 
 
 \begin{itemize}
 \item[(i)] Construct  an entire  function $\sigma _\Lambda  $ with possibly smallest growth that vanishes on
   $\Lambda $, which in analogy to the one-dimensional case we call a
   sigma-type function for $\Lambda $.
 \item[(ii)] Construct an entire function $\tau _\Lambda $ that is interpolating
   on $\Lambda $, i.e., $\tau _\Lambda (\lambda ) = \delta _{\lambda
     ,0}$ for $\lambda \in \Lambda $.  
 \end{itemize}

 In dimension $d=1$ the above problems on $\mbZ $  are solved by the
 classical Weierstrass  $\sigma$-function  
$$
\sigma (z)=z\prod_{m,n\in \bZ, \  (m,n)\neq (0,0)}\left ( 1-\frac z {m+in} \right )
e^{\frac z {m+in} +\frac 1 2\frac  {z^2}{(m+in)^2}} \, , \quad \,\, \tau(z)=\frac {\sigma(z)}z \, .
$$
% We refer to \cite{A1,Simon2} for basic  properties of
% $\sigma(z)$. 
Clearly, $\sigma $  is an entire function   with $\mbZ$
as its  zero set, and $\tau $ in interpolating on $\mbZ $. 

By contrast, in  several complex variables the above
questions are rather unusual.   The zero set of an
entire function of $d>1$ variables is always an analytic
manifold. Although it is possible to extend the construction of
the Weierstrass product  to obtain entire functions whose zero set is
a given analytic hypersurface ~\cite{GL86,ronkin}, this
construction sheds no light 
on the search of  sigma-type functions on a lattice. Only few results about
interpolation with discrete sets are known, see, e.g.,~\cite{pap87,pap92}.

Our first contribution is a general recipe for the  construction of   sigma-type functions and interpolating functions
associated to an arbitrary complex lattice. The idea  builds essentially on the
one-dimensional machinery and  yields  a  special class of sigma-type  functions, whose
zero set  is a union of analytic planes that contain the original
lattice.   Entire
functions on $\bC ^2$ with plane zeros also  play an important role in
~\cite{LR00}. In principle this idea works for arbitrary dimensions, but
we will restrict ourselves to entire functions of \emph{two}
variables.

Our second contribution is the application of the general construction
of sigma-type functions 
to show that certain complex lattices % ---  well-behaved, with additional
% symmetries, without large gaps  ---
fail to yield sampling sets for $\cF ^2_2$.  This requires the
control  of    the growth of the
sigma-type function so that it is in $ \cF ^2_d$.  For the
application to sampling in Bargmann-Fock space the goal is therefore
to find sigma-type functions with small growth. 

To provide an idea of the main  construction, we consider a model example 
that inspired our general construction.

Let 
\begin{equation}
 \label{matrixaex}
A=\begin{pmatrix}
  1 & 1/2 \\
0 & \sqrt{3}/2
\end{pmatrix},  \quad   \Lambda=A   \mbZ^2.
\end{equation}
This lattice is the complexification of the usual hexagonal lattice in
$\bR ^2$. 
Our main construction then suggests the following sigma-type function
for $\Lambda $: 
\begin{equation}
  \label{eq:n1}
\sigma _\Lambda (z_1,z_2)= \sigma(z_1) \, \sigma (\frac{z_2}{\sqrt 3} -
\frac{1}{2} ) \, \sigma (\frac{z_2}{\sqrt 3} -
\frac{i}{2}  ) \,  \sigma (\frac{z_2}{\sqrt 3} -
\frac{1+i}{2}  ) e^{2\pi (1-i) z_2} \, .
\end{equation}

It is easy to check that    $\sigma _\Lambda $ vanishes on $\Lambda $ and satisfies the growth estimate
  $|\sigma _\Lambda (z)| \leq C e^{\pi |z|^2/2}$.

The general characterization of lattice sampling sets  
(Proposition~\ref{chargauss}) then  implies the following result: 
\begin{center}
\emph{$\Lambda $ fails to be a sampling set for $\cF ^2_2$.}  
\end{center}

%   Although  $\Phi \not \in \cF
%   ^2_2$, this growth estimate implies that $\Lambda $ is not a
%   sampling set for $\cF ^2_2$ and $\cG (\Lambda )$ is not a Gabor
%   frame for $L^2(\bR ^2)$. 

% However,  we will see that $\Lambda$ is still a uniqueness set for
% $\cF _2$, and 

By contrast,  the  sigma-type function
$$
\sigma _2  (A\inv z) = \sigma  (z_1-\frac{z_2}{\sqrt{3}}) \sigma (\frac{2z_2}{\sqrt{3}})
$$
also vanishes on $\Lambda $, but it grows much too fast to be of use in the analysis of $\cF ^2_2$.

A small modification of \eqref{eq:n1} yields the interpolating function
\begin{equation}
  \label{eq:n2}
\tau _\Lambda  (z_1,z_2)= \frac{\sigma(z_1)}{z_1} \, \sigma (\frac{z_2}{\sqrt 3} -
\frac{1}{6} ) \, \sigma (\frac{z_2}{\sqrt 3} -
\frac{i}{6}  ) \sigma)(\frac{z_2}{\sqrt 3} -
\frac{1+i}{6}  ) e^{\pi (1-i)/\sqrt{3} z_2} \, .
  \end{equation}
 Although $\tau _\Lambda  \not \in \cF _2^2$, it satisfies the growth
 rate $\phi _\Lambda (z) \leq C e ^{\pi |z|^2/2}$. This suffices to
 derive a weak   Lagrange interpolation formula. %~\eqref{eq:n5}.
%  Equivalently, the Gabor family  $\cG (\Lambda )$ is 
% complete in 
% $L^2(\bR ^2)$.
See Theorem~\ref{weaksamp}. 

This example is rather puzzling. In dimension $d=1$ the hexagonal
lattice $A\bZ ^2$  has density $2/\sqrt{3}>1$ and generates a Gabor
frame $\cG (A\bZ ^2)$ with certain  optimal
features~\cite{SB03,FS17}.
By contrast in dimension $d=2$, $A\mbZ ^2$  fails to generate a Gabor
frame and a sampling set for $\cF ^2_2$, although it has density $4/3>1$.

Currently the investigation of sampling the Bargmann-Fock space on
lattices is poorly understood, and still amounts to the investigation
of examples and counter-examples. We hope that this article will stir
some interest among  the experts in several complex variables and that it
will inspire a deeper analysis of the problem. 

 \medskip

The article is organized as follows.  In Section~2 we collect the
background material about the connection between sampling in
Bargmann-Fock space and Gabor frames, the basic information about the
Weierstrass sigma-function, and a normalized representation of
lattices by means of Minkowski reduced bases. Section~3 contains the
main construction of sigma-type functions and interpolating functions
for complex lattices. In Section~4 we use this class of sigma-type
functions to show that certain ``natural'' lattices of density $>1$
fail to be sampling in Bargmann-Fock space. In Section~5 we prove a
weak Lagrange interpolation formula for certain lattices of density
$\geq 1$.

 %%%%%%%%%%%%%%%%%%%%%%%%%%%%%%%%%%%%
 %%%%%%%%%%%%%%%%%%%%%%%%%%%%%%%%%%%

\section{Sampling, Sigma Functions, and Lattices}

\subsection{Sampling in Bargmann-Fock space and Gabor frames }     
\label{duality1}
The lattice structure  leads to special
criteria  for a set $\Lambda $ to be sampling. We emphasize that these are unavailable
for arbitrary sets of points. 
% Methods of Gabor analysis   allow us to obtain additional criteria for a lattice $\Lambda$ to be a sampling set 
% for    $\cF$, see \cite{G3}.
% For the formulation we need some additional definitions.

% By  $\cF ^\infty_d$ we denote the space consisting  of all entire functions $F$ on
% $\cd$ satisfying 
% $$
% \|F\|_{\cF^\infty}:=\sup_{z\in \cd} |F(z)| e^{-\pi |z|^2/2}<\infty.
% $$ 

% Given a lattice $\Lambda = A \mbZ^d$ we denote by $\duall =(A^*)\inv
% \mbZ^d $ its adjoint lattice. It is easy to see that it   can be characterised by the property 
% $\pi_\lambda \pi_\mu=\pi_\mu\pi_\lambda$ for $\lambda\in \Lambda$, $\mu\in \duall$. 
 
% The  relation between the sampling property of the lattice $\Lambda$ and the frame property of the system $\ccG(\Lambda)$
%  is  summarized in the following statement.

The  relation between the sampling property of the lattice $\Lambda$
and the frame property of the system $\ccG(\Lambda)$ 
 is  summarized in the following statement. For its formulation we
 need the  adjoint lattice $\duall =(A^*)\inv
\mbZ^d $  of $\Lambda = A \mbZ ^d$ 
%and the Bargmann-Fock space   $\cF ^\infty_d$  consisting  of all entire functions $F$ on
%$\cd$ satisfying 
%$$
%\|F\|_{\cF^\infty}:=\sup_{z\in \cd} |F(z)| e^{-\pi |z|^2/2}<\infty.
%$$ 

\begin{prop} \label{chargauss}
 
For a  lattice $\Lambda = A \mbZ ^d  \subseteq \bC ^d$  the following are
equivalent: 

(i) $\ccG(\Lambda)$ is a frame in $L^2(\rd)$;

(ii) $\Lambda $ is a  sampling set for $\cF^2_d$; 

(iii)  There exists an interpolating  function $G \in \cF^2_d $ for
$\Lambda ^\circ$  satisfying  the Bessel property, 
i.e.,  
\begin{equation}
  \label{eq:22}
  G(\mu ) = \delta _{\mu ,0} , \qquad \text{ for all } \mu \in \Lambda
  ^\circ \, , 
\end{equation}
and  $F \to \big( \langle F , e^{\pi \bar{\lambda } \cdot z} G(z − λ)
\rangle e^{-\pi |\lambda |^2/2}\big)_{\lambda \in \Lambda } $ maps
$\cF ^2_d$ to
$\ell ^2(\Lambda )$.

(iv) $\Lambda $ is a set of uniqueness for $\cF ^\infty_d $, i.e., if $F
\in \cF ^\infty_d $ and $F (\lambda ) = 0$ for all $\lambda \in \Lambda
$, then $F \equiv 0$. 
\end{prop}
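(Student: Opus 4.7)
The plan is to establish the four-way equivalence with three standard tools: the Bargmann transform for (i) $\Leftrightarrow$ (ii), Ron-Shen/Janssen/Wexler-Raz duality on the adjoint lattice for (i) $\Leftrightarrow$ (iii), and a normal families argument linking $\cF^2_d$ to $\cF^\infty_d$ for (ii) $\Leftrightarrow$ (iv).

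For (i) $\Leftrightarrow$ (ii), I would invoke that the Bargmann transform $\mathcal B\colon L^2(\rd) \to \cF^2_d$ is an isometric isomorphism sending each time-frequency shift $\pi_\lambda \phi$ to the normalized reproducing kernel $z \mapsto e^{\pi \bar{\lambda} \cdot z - \pi |\lambda|^2/2}$. Then $|\langle f, \pi_\lambda \phi\rangle| = |(\mathcal B f)(\lambda)|\, e^{-\pi |\lambda|^2/2}$ for every $\lambda$, which identifies the Gabor frame inequality with the sampling inequality for $\cF^2_d$ verbatim.

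For (i) $\Leftrightarrow$ (iii), the duality principle for Gabor systems over a lattice asserts that $\ccG(\Lambda)$ is a frame if and only if there exists $\gamma \in L^2(\rd)$ satisfying the Wexler-Raz biorthogonality $\langle \gamma, \pi_\mu \phi\rangle = |\det A|^{-1}\delta_{\mu,0}$ for $\mu \in \Lambda^\circ$, together with the appropriate Bessel bounds. Pushing $\gamma$ through $\mathcal B$, the function $G := \mathcal B\gamma \in \cF^2_d$ becomes (up to normalization) an interpolating function on $\Lambda^\circ$ in the sense of \eqref{eq:22}; and the covariance identity $\mathcal B(\pi_\lambda \gamma)(z) = e^{\pi \bar{\lambda} z - \pi|\lambda|^2/2}G(z-\lambda)$ converts the $\ell^2$ Bessel bound into the mapping condition stated in (iii).

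Finally, for (ii) $\Leftrightarrow$ (iv), the direction (ii) $\Rightarrow$ (iv) is the easier half: if $F \in \cF^\infty_d$ vanishes on $\Lambda$ but $F \not\equiv 0$, one multiplies $F$ by a suitably chosen Gaussian peak (or convolves with the reproducing kernel) to produce an $\cF^2_d$ function vanishing on $\Lambda$ with nontrivial norm, violating the lower sampling bound. For the converse, if the sampling inequality fails, one chooses $F_n \in \cF^2_d$ with $\|F_n\|_{\cF^2_d}=1$ and $\sum_{\lambda \in \Lambda} |F_n(\lambda)|^2 e^{-\pi|\lambda|^2} \to 0$. The pointwise bound $|F_n(z)|e^{-\pi|z|^2/2} \leq \|F_n\|_{\cF^2_d}$ coming from the reproducing kernel shows that the normalized family is locally uniformly bounded. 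Using lattice translations to relocate points where $|F_n(z)|e^{-\pi|z|^2/2}$ stays bounded below to a fixed fundamental domain, one extracts via a normal families argument a nontrivial limit $F_\infty \in \cF^\infty_d$ vanishing on $\Lambda$, contradicting (iv). The main obstacle is precisely this last step: converting an $\cF^2$-failure into an $\cF^\infty$-vanishing function requires the translation invariance of the lattice in an essential way, which is why the equivalence is special to lattices and not available for arbitrary discrete sets.
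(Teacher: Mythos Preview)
Your treatment of (i)~$\Leftrightarrow$~(ii) and (i)~$\Leftrightarrow$~(iii) matches the paper's exactly: the paper invokes the Bargmann transform for the first and cites the duality theory of Janssen/Ron--Shen/Feichtinger--Kozek (i.e.\ Wexler--Raz biorthogonality) for the second. For (ii)~$\Leftrightarrow$~(iv) the paper simply cites~[G3] (``Gabor frames without inequalities''), so your sketch is more detailed than what the paper actually writes.

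Your direction (iv)~$\Rightarrow$~(ii) via weak limits and lattice translations is a legitimate route and is essentially one of the arguments underlying~[G3]. However, your direction (ii)~$\Rightarrow$~(iv) has a genuine gap. You propose to take a nonzero $F\in\cF^\infty_d$ vanishing on $\Lambda$ and ``multiply by a Gaussian peak'' or ``convolve with the reproducing kernel'' to land in $\cF^2_d$ while keeping the zeros. Neither works: to force $FH\in\cF^2_d$ with $|F(z)|\asymp e^{\pi|z|^2/2}$ you would need an entire $H$ with $H\in L^2(\cd)$, and by the sub-mean-value property any such $H$ is identically zero; convolving with the reproducing kernel does not preserve the zero set. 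So this is \emph{not} the easy half.

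The actual mechanism in~[G3] is different and deeper: once $\cG(\Lambda)$ is a frame (with Gaussian window $\phi\in M^1$), a spectral-invariance/noncommutative Wiener lemma shows the frame operator is invertible on $M^1$, so the dual window lies in $M^1$ and the reconstruction $f=\sum_\lambda\langle f,\pi_\lambda\phi\rangle\pi_\lambda\gamma$ extends from $L^2$ to $M^\infty$ in the weak$^*$ sense. Transported by the Bargmann transform, this says that $F\in\cF^\infty_d$ with $F|_\Lambda=0$ forces $F\equiv 0$. In short, (ii)~$\Rightarrow$~(iv) is not a soft complex-analysis step but relies on the lattice-specific invertibility of the frame operator on $M^1$.
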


Proposition~\ref{chargauss} explains the fundamental importance  of  sigma-type
functions and interpolating functions in $\cF ^2_d$  for the theory of
Gabor frames. % which is a ``real-variable'' topic. 
It follows from this proposition that a function $F\in \cF ^2_d$ can 
be recovered from its samples $\{F(\lambda)\}_{\lambda\in \Lambda}$
by mean of the interpolating function $G$  constructed   for $\duall $, not for $\Lambda
$!  Namely \footnote{We always  use the "real" inner product: $z\cdot
  w=\sum _{j=1}^d z_j w_j$ for
$z=(z_j), \ w=(w_j) \in \bC ^d$. }
\begin{equation}
  \label{eq:n5}
  F(z) = \sum _{\lambda \in \Lambda } F(\lambda ) e^{\pi \bar{\lambda
    }\cdot z} G(z-\lambda ) e^{-\pi |\lambda |^2/2} \, .
\end{equation}
% The required Bessel property of $G$ asserts that this series converges  in
% $\cF ^2_d$.  

%Once  an  interpolating function --- for $\duall $, not for $\Lambda
%$! --- is found, one
%can % deduce  a Lagrange-type
 %reconstruct  every $F\in \cF ^2_d$
%from its samples on $\Lambda $ via the formula
%\begin{equation}
 % \label{eq:n5}
 % F(z) = \sum _{\lambda \in \Lambda } F(\lambda ) e^{\pi \bar{\lambda
  %  }\cdot z} G(z-\lambda ) e^{-\pi |\lambda |^2/2} \, .
%\end{equation}
% The required Bessel property of $G$ asserts that this series converges  in
 %$\cF ^2_d$. 

The connection (i) and (ii) is classical. Let $f\in \lrd $ and 
\begin{equation}
  \label{eq:n9}
  Bf(z) = 2^{d/4} \, e^{-\pi z\cdot z /2} \, \intrd f(x) e^{-\pi |x|^2 + 2\pi x\cdot z} \, dx 
 \end{equation}
be the Bargmann transform of $f$. Then $B$ is unitary from $\lrd $
onto the Bargmann-Fock space $\cF ^2_d$, e.g., by
\cite{folland89}. Moreover, 
 the Bargmann transform maps
the \tfs\ $2^{d/4}\, \pi _\lambda \phi $ to the normalized  reproducing kernel $e^{\pi
  \bar{\lambda } \cdot z} e^{-\pi |\lambda |^2/2}$ of the Fock
space. Therefore 
\begin{equation}
  \label{eq:n11}
  \langle f, \pi _\lambda \phi \rangle _{L^2} =
\langle Bf, B(\pi _\lambda \phi )\rangle _{\cF ^2_d} = Bf(\lambda )
e^{-\pi |\lambda |^2/2} \, .
\end{equation}
See ~\cite{folland89,G2} and the original
literature~\cite{bargmann61}.

The equivalence of (ii) and (iii) seems beyond the realm of complex
analysis and is due to the invariance properties of lattices. It shows
that the problem of sampling on a given lattice $\Lambda $ is
equivalent to an interpolation problem of the adjoint lattice $\Lambda
^\circ $. This statement  is  part of the duality theory of
Gabor frames~\cite{feichtinger-kozek98,janssen95,ron-shen97}. 

The
equivalence of (ii) and (iv)  follows from  one of  the characterizations of
Gabor frames without 
inequalitites~\cite{G3} via the Bargmann transform. % For the Paley-Wiener space this
% characterization goes back to Beurling~\cite{beur89}. 
\medskip

\subsection{Sigma-type functions for lattices}

% It follows from Proposition \ref{chargauss} that the construction problem for Gaussian Gabor frames $\ccG(\Lambda)$ 
% reduces to construction   of entire functions in $\cd$ having possibly
% smallest  growth and vanishing on $\Lambda$ 
% or $\Lambda \setminus \{0\}$.

 Our  main  tool for the construction of sigma-type functions in $\cd
 $  is  the classical Weierstrass  $\sigma$-function  of {\em one} variable 
$$
\sigma (z)=z\prod_{\lambda \in \mbZ }\left ( 1-\frac z {\lambda }
\right ) e^{\frac z {\lambda } +\frac 1 2\frac  {z^2}{\lambda ^2}}.
$$
We refer to \cite{A1,Simon2} for basic  properties of $\sigma(z)$. In
particular, $\sigma$  is an entire function   with $\mbZ$ as the zero set.
 
In addition , see e.g. \cite{H1}, for each $\eps>0$
\begin{equation}
\label{hayman}
| \sigma(z)| \asymp e^{\pi |z|^2/2}, \quad \mathrm{dist}(z,\mbZ)>\eps.
\end{equation}
We note that $\sigma \in \cF ^\infty _1 \setminus \cF ^2_1$. By
Proposition~\ref{chargauss} the lattice of Gaussian integers $\mbZ $
fails to be a sampling set for $\mbZ $. This fact was already proved
in~\cite{bargmann71,perelomov71}

For $z=(z_1, \ldots \ , z_d)\in \cd$ we set
\begin{equation}
  \label{eq:c1}
  \sigma _0 (z) = \prod _{j=1}^d \sigma (z_j) \, .
\end{equation}
Then the function 
\begin{equation}
  \label{eq:c2}
  \sigma _A (z) = \sigma _0(A\inv z)
\end{equation}
vanishes on $\Lambda $ and satisfies the growth estimate 
 \begin{equation}
 \label{eq:c2a}
  |\sigma _A  (z) | \leq C e^{\pi \|A\inv \|_{op}^2 |z|^2/2}\, ,
\end{equation}
where as usual $\|A\|_{op}$ denotes the largest singular value of $A$.

Similarly we construct the {\em interpolating } function for $\Lambda=
A \mathbf{Z}^d$. % , i.e., an entire function $\tau_\Lambda(z)$, $z\in \bC^d$  
 % with the property
 % \begin{equation}
 % \label{eq:c2b}
 % \tau_\Lambda(\lambda)=\begin{cases}
 %                                          1, & \lambda=0; \\
 %                                          0, & \lambda\in \Lambda\setminus \{0\}.                                          
 %                                         \end{cases} 
 % \end{equation}
 We define $\tau(z)=z^{-1}\sigma(z)$ for $z\in \bC$ and
 \begin{equation}
 \label{eq:c2c}
 \tau_0(z)=\prod_{j=1}^d \tau (z_j); \ \tau_\Lambda(z)=\tau_0(A^{-1}z) \ \mbox{for}  \  z=(z_1,\ldots \ , z_d)\in \bC^d. 
 \end{equation}
For example, consider  the lattice $\alpha \mbZ  \subseteq \bC $ with
adjoint lattice $\duall = \alpha \inv \mbZ $. Then $\duall $
possesses the interpolating function $\tau (\alpha z)$, which belongs
to $\cF ^2_1$, \fif\ $0<|\alpha | <1$. By Proposition~\ref{chargauss}
$\alpha \mbZ  $ is a sampling set for $\cF ^2_1 $, \fif\ $|\alpha | <1$. 
This is a baby version of  the complete characterization of
one-dimensional sampling sets in~\cite{L1, S1,S2}.  
\medskip

% The strategy of studying the frame properties of $\ccG(\Lambda)$ goes as follows.
% The same lattice $\Lambda$ can be generated by various matrices $A$  so the corresponding functions 
% $\sigma_A$ would have different  growth. For a given $\Lambda$ one has to choose $A$ so that 
% $\Lambda=A\mbZ ^d$ and $\sigma_A$ has slowest possible growth, then if need be one tries to modify 
% this function in order to decrease its growth even further.

\subsection{Lattice reduction and Minkowski-reduced bases}

Clearly, the functions $\sigma _\Lambda $ and $\tau _\Lambda $ depend
on the generating matrix $A$, or equivalently, on the choice of a
basis of $\Lambda $. Our first task is to choose a suitable basis of
$\Lambda $, so that $\|A\inv \|_{\mathrm{op}}$ is small. 
% There is no universal algorithm of choosing  the  matrix  $A$ with
% small  norm which generates 
% a given lattice $\Lambda=A\mbZ$. 
One of possible recipes is to choose
a basis of $\Lambda $ which consists of the shortest possible  vectors. 
 
Precisely, choose  vectors $\mathbf{a}_j\in \Lambda$,
such that $\|\bba _1 \| = \min \{\|\lambda \| : \lambda \in \Lambda
\}$ and 
$$
\|\bba _j \| = \min \{ \|\lambda \| : \lambda \in \Lambda , \lambda
\not \in \mathrm{span}\, [\bba _1, \dots \bba _{ j-1}] \} \, .
$$
Such a  basis is called 
Minkowski reduced for $\Lambda $ ~\cite{LLL}, and  satisfies
$$
\|\bba _1\| \leq \| \bba _2 \| \leq \dots \leq  \| \bba _d \| \, . 
$$
After setting $A = \big(  \mathbf{a} _1 \,\,  \mathbf{a} _2 \,\, \dots
\,\, \mathbf{a} _d \big)$,  
% \end{smallmatrix} \big)% $ A = 
% \big(\begin{smallmatrix}
%   \mathbb{a} _1 & \mathbb{a} _2 & \dots & \mathbb{a} _d 
% \end{smallmatrix} \big)
% $,
the lattice is $\Lambda = A \mbZ ^d$. 
% this implies that there is a
% canonical, albeit not unique, matrix to describe $\Lambda $. 
% The vectors $\bba _j$ are ordered by length as 
% and the entries satisfy additional size estimates. 

 We may  write $A=US$, where $U$ is a
 unitary matrix   and $S$ is an upper triangular matrix with real
 values on the diagonal and 
 with columns $\bbb _j$ (by  QR-factorization or  Gram-Schmidt
 orthogonalization). Since the Fock space $\cF ^2_d$ is invariant
 under a unitary transformation  $U$  of coordinates, a set $\Lambda $
 is sampling, \fif\ $U\inv \Lambda $ is sampling. Replacing $\Lambda =
 A\mbZ ^d$ by $U\inv \Lambda = S \mbZ ^d$, we may therefore
 assume without loss of generality that  $\Lambda = S \mbZ ^d$ for an
 upper triangular matrix $S$. 
%  Equivalently,  the Gabor system $\ccG(A\mbZ^d)$  is a frame, \fif\  $\ccG(S\mbZ^d)$
% is a  frame for $L^2(\mathbb R^d)$.
 
Since $U$ preserves lengths, the columns
of $S$ are also Minkowski-reduced basis, and  $\|\bbb _1\| \leq \| \bbb _2 \|
\leq \dots \leq \| \bbb _d \| $.  The upper triangular matrix $S$ is of
the form 
\begin{equation}
  \label{eq:r1}
S = (s_{jk})= 
\begin{pmatrix}
   \gamma _1 & \ast & \dots & \ast    \\
0 & \gamma  _2 &   \dots & \ast  \\
\vdots \\
 0 &  \dots  & 0 & \gamma _d 
\end{pmatrix} \, ,
\end{equation}
and its entries satisfy the additional conditions 
\begin{align}
s_{j,k}  &= 0 \qquad k<j, \quad s_{kk}  = \gamma _k
  >0,       \label{eq:n3} \\
 |\mathrm{Re} & \, s_{j,k} |, \quad |\mathrm{Im}\, s_{j,k} |   \leq
  \frac{s_{j,j}}{2} \, . \end{align}
 In   \cite{G1} we proved the following result for general
 upper-diagonal matrices in arbitrary dimension.  % we repeat the proofs for completeness of the presentation

\begin{prop}
  \label{known}
  Let $\Lambda = S\mbZ ^d $ with an upper triangular matrix $S$. 
%  satisfying  \eqref{eq:n3}.

(i)   If $\gamma _j <1$ for $j=1, \dots ,d$, then $\Lambda $ is a sampling
  set for $\cF ^2_d$ (and $\cG (\Lambda )$ is a frame for $\lrd $).

 (ii) If $\gamma_d\geq 1$, then $\Lambda $ is not sampling and
 $\ccG(\Lambda)$ is not  a frame.  
\end{prop}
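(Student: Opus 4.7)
My plan is to treat the two parts separately. Part (ii) is elementary. Since $S$ is upper triangular, any $\lambda=Sk\in\Lambda$ has last coordinate $\lambda_d=\gamma_d k_d\in\gamma_d\mbZ$. Taking $F(z):=\sigma(z_d/\gamma_d)$ yields a nonzero entire function that vanishes on all of $\Lambda$, and by \eqref{hayman}
\[
|F(z)|\,e^{-\pi|z|^2/2}\leq C\exp\!\left(\tfrac{\pi|z_d|^2}{2\gamma_d^2}-\tfrac{\pi|z|^2}{2}\right)\leq C,
\]
because $\gamma_d\geq 1$ forces $|z_d|^2/\gamma_d^2\leq|z|^2$. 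Hence $F\in\cF^\infty_d\setminus\{0\}$ vanishes on $\Lambda$, and Proposition~\ref{chargauss}(iv) shows that $\Lambda$ is not sampling.

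For part (i), I would argue by induction on $d$. The base case $d=1$ is the classical Lyubarskii--Seip characterization already cited. For the inductive step, I block-decompose
\[
S=\begin{pmatrix} S' & s \\ 0 & \gamma_d\end{pmatrix},
\]
where $S'$ is an upper-triangular $(d-1)\times(d-1)$ matrix with diagonal entries $\gamma_1,\ldots,\gamma_{d-1}<1$, so that $\Lambda':=S'\mbZ^{d-1}$ samples $\cF^2_{d-1}$ by the inductive hypothesis. The lattice then fibers as $\Lambda=\bigcup_{k_d\in\mbZ}(\Lambda'+k_d s)\times\{\gamma_d k_d\}$, and Fubini gives
\[
\|F\|^2_{\cF^2_d}=\int_{\bC}\|F(\cdot,z_d)\|^2_{\cF^2_{d-1}}\,e^{-\pi|z_d|^2}\,dz_d.
\]

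Applying the inductive hypothesis slicewise (together with the modulation invariance of $\cF^2_{d-1}$, which guarantees that each translated lattice $\Lambda'+a$ samples with constants independent of $a$), and then one-dimensional Lyubarskii--Seip sampling on $\gamma_d\mbZ$ to the $\cF^2_{d-1}$-valued holomorphic function $z_d\mapsto F(\cdot,z_d)$, I obtain
\[
\|F\|^2_{\cF^2_d}\asymp \sum_{k_d\in\mbZ}\sum_{k'\in\mbZ^{d-1}}|F(S'k'+k_d s,\gamma_d k_d)|^2\,e^{-\pi|S'k'+k_d s|^2}\,e^{-\pi\gamma_d^2|k_d|^2}=\sum_{\lambda\in\Lambda}|F(\lambda)|^2\,e^{-\pi|\lambda|^2},
\]
using the identity $|\lambda|^2=|S'k'+k_d s|^2+\gamma_d^2|k_d|^2$.

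The main obstacle is the vector-valued one-dimensional sampling step: the scalar Lyubarskii--Seip estimate must be extended to holomorphic functions taking values in $\cF^2_{d-1}$, with constants uniform across an orthonormal basis expansion. A related subtlety is the uniformity of the slicewise sampling constants in the translation $a=k_d s$, which follows from the modulation invariance of $\cF^2_{d-1}$ but must be tracked carefully through the induction. A more direct attempt via Proposition~\ref{chargauss}(iii) using the product interpolant $G_0(z)=\tau_0(S^*z)$ would be cleaner but fails, since $\|S\|_{\mathrm{op}}$ may exceed $1$ even when every diagonal entry $\gamma_j<1$; this explains why the inductive fibration is the natural route.
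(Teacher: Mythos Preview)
Your Part~(ii) is correct and is exactly the paper's argument.

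Your Part~(i), however, has a genuine gap, and the gap stems from a misjudgment you make explicitly. You write that the direct route via Proposition~\ref{chargauss}(iii) ``fails, since $\|S\|_{\mathrm{op}}$ may exceed $1$'', and then embark on an inductive fibration whose vector-valued sampling step you yourself flag as unresolved. But the direct route does \emph{not} fail; you simply picked the wrong interpolant. The function $\tau_0(S^*z)$ is not the right choice. The paper instead uses
\[
F_0(z)=\prod_{j=1}^d \frac{\sigma(\gamma_j z_j)}{z_j}=\prod_{j=1}^d \gamma_j\,\tau(\gamma_j z_j),
\]
which depends only on the \emph{diagonal} of $S$. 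The reason this interpolates on $\Lambda^\circ=(S^*)^{-1}\mbZ^d$ is precisely the triangular structure you exploit elsewhere: $(S^*)^{-1}$ is \emph{lower} triangular with diagonal $(\gamma_1^{-1},\dots,\gamma_d^{-1})$, so for $\mu=(S^*)^{-1}k$ the first coordinate is $\mu_1=k_1/\gamma_1$. The first factor $\tau(\gamma_1\mu_1)=\tau(k_1)$ vanishes unless $k_1=0$; once $k_1=0$, the second coordinate becomes $\mu_2=k_2/\gamma_2$, and the second factor vanishes unless $k_2=0$; and so on down the diagonal. Thus $F_0(\mu)\neq 0$ only at $\mu=0$.

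Membership $F_0\in\cF^2_d$ is then immediate because the norm factors as a product of one-dimensional integrals $\int_\bC |\tau(\gamma_j z_j)|^2 e^{-\pi|z_j|^2}\,dz_j$, each finite precisely when $\gamma_j<1$. No operator norm of $S$ enters. This is the paper's argument (referring to \cite{G1} for details), and it bypasses entirely the vector-valued sampling issue that blocks your induction.
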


The proof goes roughly as follows. The adjoint lattice is  $\Lambda ^\circ
= (S^*)\inv \mbZ ^d$, where $(S^*)\inv $ is a lower trigonal matrix
with diagonal $ (\lambda _j \inv )_{j=1, \dots ,d}$. %  with 
% \begin{equation}
% \label{matrixadual}
% (A^*)^{-1} =
% \begin{pmatrix}
%   \gamma _1^{-1} & 0 \\
% -\frac{\bar{\beta}}{\gamma_1\gamma_2} & \gamma _2^{-1}
% \end{pmatrix} \, .
% \end{equation}
Then  the function  
$$
F_0(z_1,z_2, \dots , z_d)= \prod _{j=1}^d \frac {\sigma(\gamma_j z_j)}{z_j}
$$
belongs to $\cF^2_d$ and solves the interpolation problem $(iii)$ in Proposition \ref{chargauss}.
If $\gamma _d\geq 1$, then the  function $F_1(z_1,z_2, \dots , z_d):=
\sigma(\gamma_d^{-1} z_d)$ belongs to $\cF^\infty _d$ and vanishes on
$\Lambda$. By Proposition \ref{chargauss} $\Lambda $ fails to be a
sampling set. 

A different class of examples is discussed in ~\cite{PR13}.

From now on we deal with explicit constructions in two complex
variables.  We may assume without loss of
generality that $\Lambda $ is determined by the matrix
\begin{equation}
\label{matrixa}
A =
\begin{pmatrix}
  \gamma _1 & \beta \\
0 & \gamma _2
\end{pmatrix}
\end{equation}
with 
\begin{equation}
\label{eq:d9a}
\gamma _1, \gamma _2 >0, \quad  \gamma _1 ^2 \leq |\beta |^2 + \gamma _2^2. 
\end{equation}
Since the basis is reduced, we  have 
\begin{equation}\label{eq:d9b}
  |\mathrm{Re}\, \beta|, \quad |\mathrm{Im}\, \beta|   \leq \frac{\gamma
  _1}{2} .
\end{equation}

%In the next section we give a more detailed analysis of the Gabor systems generated by such matrices. 

\section{A  Construction of Sigma-Type Functions and Interpolating
  Functions via Sublattices } 

To go beyond Proposition~\ref{known}, we need a more involved
recipe for sigma-type and interpolating functions. 
In  this section we provide such  a  general construction  
inspired  by the model 
example \eqref{eq:n1}. The goal is to produce sigma-type  functions or
interpolation functions   with   small   
growth. 

Given a lattice   $\Lambda = A \mbZ^2 $  in $\bC^2 $, we will use the
following master plan to construct a   sigma-type  function 
 function or an interpolating function  on $\Lambda $ via a sublattice. 
\begin{enumerate}
\item Construct a sublattice $\Gamma \subseteq \Lambda  $ that
  possesses an orthogonal basis (or possibly a nearly orthogonal
  basis). %; for such lattices estimate \eqref{eq:c2a} seems to be optimal.
\item Construct a sigma function $\sigma _\Gamma $  and an
  interpolating function $\tau _\Gamma $ on
  $\Gamma $ according to~\eqref{eq:c2} and \eqref{eq:c2c}.%  and  
  % estimate their  growth rate.
%Note that by construction both functions are built  from sigma functions of  one variable by   a tensor product and a linear coordinate transform. 
\item Determine a suitable set  of coset representatives $ \Lambda  /
  \Gamma $. 
  % and evaluate  $\sigma_\Gamma $ and $\tau_\Gamma $  on the cosets   $\nu+\Gamma$ of $\Lambda  $.
\item The sigma function  and the interpolating  function on $\Lambda
  $    will be a suitable product of shifts
  of each of the factors of $\sigma_\Gamma $ and $\tau _\Gamma
  $.  %, such that  each factor absorbs a certain number of cosets of the sublattice  $\Gamma $.%   Product is chosen, so  that as many cosets of
%   $\Gamma $ as possible are killed by a single factor.
%\item Estimate the growth of the resulting interpolating function. 
\end{enumerate}

\subsection{Sublattices and their cosets}
Following the  outline above, we first describe sublattices and their
cosets in $\bC ^2$. 

 Every (complex)  sublattice of $\mbZ^2 $ is of
the form $B \mbZ^2 $ for an invertible  matrix $2\times 2$-matrix
$B$ with entries in $\mbZ$ (in short, $B\in
\mathrm{GL} (2, \mbZ )$).   
Consequently, every sublattice of a lattice $\Lambda = A \mbZ^2
\subseteq \bC ^2 $ is
of the form 
$$
\Gamma = A B \mbZ^2
$$
 for some $B\in \mathrm{GL} (2, \mbZ )$.

Clearly, we can represent the full lattice $\Lambda $ as a union of
shifts of the sublattice $\Gamma $ as follows $\Lambda = \bigcup
_{j=1}^n (\delta _j + \Gamma )$ for some lattice points $\delta _j\in
\Lambda $. In fact, the shifts $\delta _j$  are the representatives
 of the quotient $\Lambda / \Gamma $. 
We will use the following explicit  parametrization of $\mbZ ^2/ B
\mbZ ^2$ and hence of  $\Lambda  / \Gamma $.             

\begin{lemma}\label{l3}
  Let  $B=\big(
  \begin{smallmatrix}
    a & c \\ b &d
  \end{smallmatrix}
\big)$ with   entries $a,b,c,d \in \mbZ $. Let $\gamma = \mathrm{gcd}
(a,c)\in \mbZ$~\footnote{Note that over $\mbZ $ the greatest common divisor is
only   determined up to multiplication with $\pm 1, \pm i$. We refer the reader to \cite{IR}
for the facts on division in $\mbZ$.} and  $\Delta
= \det B  = ad-bc  $. % Assume  that $a$ and $c$ are relatively prime \emph{over the
%   Gaussian integers} $\mbZ
% $.
Let $Q=  \big([0,1) + i[0,1)\big)\subseteq \bC $.  Then  the set
$$
\cD = \{ (\delta _1,\delta _2 )\in \mbZ^2: \delta _1\in \gamma Q \cap
\mbZ , \delta _2 \in \tfrac{\Delta}{\gamma} Q \cap \mbZ  \} 
$$ 
is a set of coset representatives for $\mbZ^2 / B\mbZ ^2$. 

In particular, if $B$ possesses \emph{real-valued entries} $a,b,c,d
\in \bZ $,  and $a$ and $c$ are relatively prime over $\bZ $, then 
%$$
%\cD = \{ (0,\delta +i \delta' ): 0\leq \delta, \delta ' < |\det B|\} 
%$$
$$
\cD = \{ (0,\delta );   \delta= \alpha+i \alpha' ), \  0\leq \alpha, \alpha ' < |\det B|\} 
$$
is a set of coset representatives of $\mbZ^2 / B\mbZ^2 $.
\end{lemma}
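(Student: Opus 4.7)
My plan is to reduce $B$ to a triangular form by right-multiplication with a unit of $\mathrm{GL}(2, \mbZ)$ (which preserves $B\mbZ^2$) and then read off the cosets from the triangular form. Since $\mbZ$ is a Euclidean domain, Bezout yields $u, v \in \mbZ$ with $ua + vc = \gamma$. Writing $a = \gamma a'$, $c = \gamma c'$ with $\gcd(a', c')$ a unit, we get $ua' + vc' = 1$, and the matrix
\[
U = \begin{pmatrix} u & -c' \\ v & a' \end{pmatrix}
\]
has $\det U = 1$ and satisfies $(a, c)\, U = (\gamma, 0)$. Hence $T := BU$ is lower triangular with diagonal entries $\gamma$ and $\Delta/\gamma$ (the second forced by $\det T = \det B$). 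Since $U\mbZ^2 = \mbZ^2$, we have $B\mbZ^2 = T\mbZ^2$, so it suffices to show that $\cD$ is a transversal for $\mbZ^2 / T\mbZ^2$.

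For this I would use the standard fact that, for any nonzero $\beta \in \mbZ$, the half-open parallelogram $\beta Q \subset \bC$ is a fundamental domain for the additive lattice $\beta\mbZ$: each $x \in \mbZ$ has a unique representative in $\beta Q \cap \mbZ$ modulo $\beta\mbZ$, obtained by taking the componentwise fractional part of $x/\beta$ and scaling back by $\beta$. Given $(x_1, x_2) \in \mbZ^2$, I first reduce $x_1$ modulo $\gamma\mbZ$ to obtain $\delta_1 \in \gamma Q \cap \mbZ$ with $x_1 - \delta_1 = \gamma y_1$, $y_1 \in \mbZ$; then I reduce $x_2 - e y_1$ modulo $(\Delta/\gamma)\mbZ$ to obtain $\delta_2 \in (\Delta/\gamma) Q \cap \mbZ$ and $y_2 \in \mbZ$, giving $(x_1, x_2) - (\delta_1, \delta_2) = T(y_1, y_2) \in T\mbZ^2$. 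For uniqueness, if $\delta - \delta' \in T\mbZ^2$ then $\delta_1 - \delta_1' \in \gamma\mbZ$ with both terms in $\gamma Q$; the fundamental-domain property forces $\delta_1 = \delta_1'$, and the same argument on the second coordinate then yields $\delta_2 = \delta_2'$.

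For the ``in particular'' statement, if $a, c \in \bZ$ are coprime over $\bZ$, then any Gaussian integer dividing both must divide the $\bZ$-Bezout combination $ua + vc = 1$ and is therefore a unit; so $\gamma = 1$, $\gamma Q \cap \mbZ = \{0\}$, and the second coordinate set reduces to $\{\alpha + i\alpha' : 0 \le \alpha, \alpha' < |\det B|\}$ after choosing the positive associate of $\Delta$.

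The main obstacle I foresee is bookkeeping rather than depth: one must keep track that $\gcd(a, c)$ is only determined up to multiplication by units of $\mbZ$, and verify that the resulting rotated parallelogram $\gamma Q$ serves as a valid fundamental domain independently of the unit chosen. Beyond that, the argument is essentially the Smith-normal-form-style reduction available in any Euclidean ring.
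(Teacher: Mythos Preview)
Your proof is correct and takes a genuinely different route from the paper's. The paper works directly with $B$: it assumes two elements $(\delta_1,\delta_2),(\delta_1',\delta_2')\in\cD$ differ by $B(k,l)^T$, uses the first coordinate $ak+cl=\gamma(a'k+c'l)\in\gamma(Q-Q)$ to force $a'k+c'l=0$, invokes $\gcd(a',c')=1$ to write $(k,l)=(-Nc',Na')$, then uses the second coordinate to force $N=0$. Completeness is obtained by a cardinality count rather than constructively. Your approach instead performs a column reduction $T=BU$ to lower-triangular Hermite form with diagonal $(\gamma,\Delta/\gamma)$, reducing the problem to two one-dimensional fundamental-domain arguments and yielding both existence and uniqueness explicitly. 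Your method is more structural and makes the ``why'' transparent (it is essentially Smith/Hermite normal form over the Euclidean ring $\mbZ$), while the paper's hands-on computation avoids introducing the auxiliary matrix $U$ and stays closer to the coordinates actually used later in Section~3. Your remark about the unit ambiguity in $\gamma$ and the need to pass to the positive associate of $\Delta$ in the ``in particular'' case is well taken; the paper glosses over this.
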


\begin{proof}%[Proof for $\mathrm{gcd} (a,c) = \delta = 1$]
We observe that the set $Q$ is a (half-open)  square in $\bC $ and
that $(Q-Q) \cap \mbZ  = \{0\}$, where  $Q-Q = (-1,1) + i (-1,1)$ is
the difference set. %  contains only the
% Gaussian integer $0$.
% with edges of length
% $|\Delta | $ and that the diameter of $Q$ is therefore $\sqrt{2} |\Delta
% |$. 
We write $a= \gamma a', c = \gamma c'$ for $a',c'\in \mbZ $, and note
that $\Delta = \gamma (a' d - b c') $, so that $\Delta /\gamma \in
\mbZ $. 

Assume that $ \delta _1, \delta _1'  \in \gamma Q \cap \mbZ$, $ \delta
_2, \delta _2'  \in \frac{\Delta}{\gamma }  Q \cap \mbZ$ and  that
$(\delta _1- \delta _1', \delta _2 - \delta _2' )^T = B
(k,l)^T$ for some $k,l \in \mbZ $, in other words $(\delta _1, \delta _1')$ and
$(\delta _2, \delta _2')$ represent the same coset.  Then 
$$
\begin{pmatrix}
  a& c \\ b & d 
\end{pmatrix}
\begin{pmatrix}
  k \\ l 
\end{pmatrix}
=
\begin{pmatrix}
  ak+cl \\
bk+dl
\end{pmatrix} = \begin{pmatrix}
\delta _1-\delta _1' \\
\delta _2-\delta _2' 
\end{pmatrix} \, .
$$
The first coordinate is   
$$
\delta _1-\delta _1' = ak+cl = \gamma (a'k+c'l) \in \gamma (Q-Q) \cap
\mbZ  \, , $$
and therefore the Gaussian integer $a'k+c'l $ is in $Q-Q$, which
implies that $a'k+c'l = 0$. 
Since $\mathrm{gcd} (a', c') = 1 $ (up to multiplication by $\pm 1,
\pm i$), it follows that $l=Na', k=-Nc'$ for some $N\in \mbZ $.  

Now the second coordinate of $B(k,l)^T$ is
$$
bk+dl = N(-bc' + a'd) = N \frac{\Delta}{\gamma} \in \frac{\Delta}{\gamma} (Q-Q) \cap \mbZ  \,
.
$$
% Multiplying by $\gamma $, we find $\gamma (bk+dl) = N\Delta \in
% \Delta  (Q-Q) \cap (\mbZ )$, whence
Therefore 
$N\in (Q-Q) \cap (\mbZ
)$. This implies that $N=0$ and thus $k=l=0$. Altogether we have shown
that $(\delta _1, \delta _1') = (\delta _2, \delta _2')$. It is easily
verified that the
cardinality of  $\gamma Q \cap \mbZ $ is $|\gamma |^2$ for $\gamma \in
\mbZ $, therefore the cardinality of $\cD $ is $ |\det
B| ^2$. Consequently  $\cD $ is a complete
set of  representatives of $\mbZ ^2 / B \mbZ ^2$. 
% If $|N| = 1$, i.e., $N=\pm 1$ or $N= \pm i$, then $\delta _1-\delta
% _2$ is parallel to one of the edges of $Q$ and thus  $\delta _1$ and
% $\delta _2$ must lie on the boundary of $Q$. However,  this is impossible
% because $Q$ is half-open. 

% If $|N| \geq \sqrt{2}$, then we have 
% $ \sqrt{2} |\Delta | \leq  |N\Delta | = |\delta _1 - \delta _2| <
% \sqrt{2} |\Delta |$ (because $\delta _1, \delta _2 \in Q$ and the
% diameter of $Q$ is $\sqrt{2}|\Delta |$). Again this is impossible;
% therefore $\delta _1 = \delta _2$. 
\end{proof}

% If $\mathrm{gcd}\, (a,c) = \gamma $,  then  the set
% $$
% \cD = \{ (\delta _1,\delta _2)\in \bZ ^2: 0\leq \delta _1 < \delta , 0\leq \delta _2 < \Delta
% / \gamma \}
% $$ 
% is a set of coset representatives for $\bZ ^2 / B\bZ ^2$, and $\cD + i
% \cD $ is a  set of coset representatives for $\mbZ^2 / B\mbZ ^2$.

% **********8

% We need a special notation. For $z,\zeta \in \bC $ let
% \begin{equation}
% \label{eq:d9bca}
% \beta _\zeta f(z) = e^{\pi \bar{\zeta}z-\pi |\zeta|^2/2} f(z-\zeta ) 
% \end{equation}
% be  the (one-dimensional) Fock shift.  We have 
% $$
% e^{-\frac \pi 2 |z|^2} |\beta_\zeta f(z)|= e^{-\frac \pi 2 |z-\zeta|^2}|f(z-\zeta)|,
% $$
% so  $\beta
% _\zeta $ is a unitary operator on $\cF (\bC )$ and an isometric
% isomorphism on $\cF ^\infty (\bC )$.   

\subsection{A  construction of sigma-type  functions} \label{4.3}

We begin with the  calculation of $\sigma _\Gamma $ and $\tau _\Gamma
$ on the cosets $\nu +\Gamma $ of $\Gamma $ in $\Lambda $. 
%Keeping the notation of Section~\ref{alg}, 
Let 
$B=\big(
  \begin{smallmatrix}
    a & c \\ b &d
  \end{smallmatrix}
\big)$,  $ \Delta= \det B$ and $\Gamma = AB \mbZ ^2$ be the
sublattice of $\Lambda = A \mbZ ^2$. In view of the explicit examples,
we  assume furthermore that %  in the definition of the  sublattice $\Gamma = AB\mbZ^2$
% satisfies the condition 
$|\mathrm{gcd }\, (a,c) | = 1$. 

Let, as before, $\sigma(z)$ be the classical Weierstrass
$\sigma$-function for $\mbZ$. For $z=(z_1,z_2)\in   \bC$
we denote $\sigma \otimes \sigma (z)=\sigma (z_1)\sigma(z_2)$ and set 
$$
\sigma _\Gamma  (z) = (\sigma \otimes \sigma ) \Big( (AB)\inv
z\Big).
$$

%Let $\sigma _\Gamma  (z) = (\sigma \otimes \sigma ) \Big( (AB)\inv
%z\Big)$. 
We first evaluate $\sigma _\Gamma $ on $\Lambda $. Since
$|\mathrm{gcd}(a,c)| = 1$, the representatives of $\mbZ ^2/ B\mbZ^2$
can be chosen to be $(0,\delta )$, and thus a  general lattice
point  $\lambda \in \Lambda = A\mbZ^2$ can be written as 
\begin{equation}
\label{eq:q1}
\lambda =  A \Big( B
\begin{pmatrix}
  k \\ l 
\end{pmatrix}
+
\begin{pmatrix}
  0 \\ \delta 
\end{pmatrix}
\Big) \, 
\end{equation}
 for some $ k,l \in
\mbZ $ and $(0,\delta ) \in \cD $. % Then 
% \begin{equation}
%   \label{eq:aug1}
%   (AB)\inv \lambda =
%   \begin{pmatrix}
%     k \\ l 
%   \end{pmatrix} + B\inv
%   \begin{pmatrix}
%     0 \\ \delta 
%   \end{pmatrix} = 
% \begin{pmatrix}
% k  -c\delta /\Delta \\ l+ a\delta / \Delta 
% \end{pmatrix} \, . 
% \end{equation}
Consequently  %by \eqref{eq:d2}
\begin{align} 
\sigma_\Gamma (\lambda ) &= (\sigma \otimes \sigma ) \Big( (AB)\inv
  \big(AB (k,l)^T + A (0,\delta)^T\big) \Big) \\
&= (\sigma \otimes \sigma ) \Big(
\begin{pmatrix}
  k \\ l 
\end{pmatrix}
+ B\inv
\begin{pmatrix}
  0 \\ \delta 
\end{pmatrix}
\Big) \notag \, .
\end{align}
 Since 
$$ B\inv \begin{pmatrix}
  0 \\ \delta 
\end{pmatrix}
= \frac{1}{\Delta}
\begin{pmatrix}
  d & -c \\ -b &a
\end{pmatrix}
\begin{pmatrix}
  0 \\ \delta 
\end{pmatrix}
=
\begin{pmatrix}
  -c\delta /\Delta \\ a\delta / \Delta 
\end{pmatrix} \, ,
$$
the sigma function of the sublattice $\Gamma $ evaluated on $\Lambda  $
is
\begin{equation}
  \label{eq:d3}
  \sigma_\Gamma (\lambda ) = \sigma \big(k- \frac{c\delta }{\Delta}\big)
  \sigma \big(l+ \frac{a\delta }{\Delta}\big)   \, .
\end{equation}

Here is our  key  observation:

\emph{If   $\frac{\Delta}{c}$  divides $\delta$, then $\frac{c\delta
}{\Delta}\in \mbZ$ and $\sigma (k- \frac{c\delta
}{\Delta}) = 0$, thus  $\sigma _\Gamma \big(
\big( \begin{smallmatrix}
  0 \\ \delta
\end{smallmatrix} \big)
+\gamma \big) = 0$ for all $\gamma \in \Gamma $}.

Similarly,  if
$\frac{\Delta}{a} $ divides $\delta $, then $  \sigma (l+
a\delta / \Delta )  =0$. 
Thus a single factor of $\sigma_\Gamma$ in the product~\eqref{eq:d3} vanishes on the whole coset
$(0,\delta) +\Gamma$ and we need not include additional factors   
to $\sigma_\Lambda$  in order to annihilate this coset.
% Thus the  coset $\delta + \Gamma $ is killed by a single factor of $\sigma _\Gamma $.
We will therefore try to choose a
sublattice $\Gamma $  such
that $a$ or $c$ divides $\det B  = \Delta $ and  $\Delta / c$ is
small. %Then many $\delta \in \mbZ $ are divided by $\Delta /c$. 

To take care of the cosets that do not vanish in this way, we use
(one-dimensional) Fock shifts. For $z,\zeta \in \bC $ let
\begin{equation}
\label{eq:d9bca}
\beta _\zeta f(z) = e^{\pi \bar{\zeta}z-\pi |\zeta|^2/2} f(z-\zeta )
\, .
\end{equation}
 We have 
$$
e^{-\frac \pi 2 |z|^2} |\beta_\zeta f(z)|= e^{-\frac \pi 2 |z-\zeta|^2}|f(z-\zeta)|,
$$
so  $\beta
_\zeta $ is a unitary operator on $\cF ^2_1$ and an isometry
%isomorphism 
on $\cF ^\infty _1$.    In particular, 
together with \eqref{hayman} this yields
\begin{equation}
\label{eq:d9bbb}
|\beta_\zeta\sigma(z)| \asymp e^{\pi |z|^2/2}\, , \,\, \text{ if } \,\, \ \mbox{dist}(z, \mbZ+\zeta)>\eps.
\end{equation}

% Precisely, if $c$ divides $\Delta $, then the  representatives in $\cD
% $ of the form $\delta = \frac{\Delta}{c} r$, $r\in \mbZ$  are annihilated by the single factor
% $$
% \sigma (k-\frac{c}{\Delta}\frac{\Delta}{d}r) = \sigma (k-r) \, .
% $$
% There are exactly $|c|^2$ cosets  of this form. 

%A similar observation holds for interpolating functions. 
%\textbf{Definition of a sigma function and an interpolating function on $\Lambda $}

%We now define a sigma function $\sigma _\Lambda $ and an interpolating function $\tau _\Lambda $ from the corresponding functions $\sigma_\Gamma $ and $\tau _\Gamma $ by ``distributing  the cosets of $\Gamma $ suitably to one of the univariate  factors of'' $\sigma _\Gamma $.

% Below we specify this scheme in more detail. Optimizing the growth of the corresponding $\sigma$ function can be achieved by a proper distribution of cosets of $\Gamma$  to one of the univariable factors of $\sigma_\Gamma$. 

For $z= (z_1,z_2) \in \bC ^2$ let $p_1(z_1,z_2) = z_1$ and
$p_2(z_1,z_2) = z_2$ be the projections onto the first and second
coordinate of $z$. In this notation
$\sigma _\Gamma (z) = \sigma \big( p_1((AB)\inv z)\big) \, \sigma \big(
p_2((AB)\inv z)\big)$.

\vs
We split the construction of a sigma-type function for $\Lambda $  into several steps.

\textbf{Step 1.} Partition the coset representatives  $\cD $ into disjoint subsets
\begin{equation}
  \label{eq:vm1}
\cD =   \{ (0,\delta ) \in \cD : \frac{\Delta }{c} |
\delta \} \cup \cE _1 \cup \cE _2  = \cE _0 \cup \cE _1 \cup \cE _2 \,
,  
\end{equation}
or 
$$
\cD =   \{ (0,\delta ) \in \cD  : \frac{\Delta }{a} |
\delta \} \cup \cE _1' \cup \cE _2'  \, .  $$
where $\cE _1 $ and $\cE _2$ are a convenient or arbitrary partition
of those $\delta$ with $\frac{\Delta }{c} \not | \, 
\delta $. % There are many choices for the sets $\cE _1$ and $\cE _2$. What is
% important that they are disjoint and do not contain a representative
% $(0,\delta)$ such that $\Delta /a$ or $\Delta /c$ divides $\delta $. 

\textbf{Step 2.}
We now  define the entire functions
\begin{align}
 & \hspace{ 2 cm} \sigma_{\Lambda } (z) =   \label{defsig1} \\
&=  \sigma (p_1((AB)\inv z)) \prod _{\nu \in \cE _1} \Big(\beta _{p_1(B\inv \nu))  }\sigma
\Big)  \Big(p_1 \big( (AB)\inv z) \Big)
  \, \prod _{\nu \in \cE _2} \Big(\beta _{p_2(B\inv \nu )  }\sigma 
\Big)    \Big(p_2 \big( (AB)\inv z \big) \Big), \notag
  \end{align}
and 
\begin{align*}
 & \hspace{ 2 cm} \tilde \sigma_{\Lambda } (z)  =    \label{defsig2} \\
&=  \sigma (p_2((AB)\inv z)) \prod _{\nu \in \cE _1'} \Big(\beta _{p_1(B\inv \nu))  }\sigma
\Big)  \Big(p_1 \big( (AB)\inv z) \Big)
  \, \prod _{\nu \in \cE _2'} \Big(\beta _{p_2(B\inv \nu )  }\sigma 
\Big)    \Big(p_2 \big( (AB)\inv z \big) \Big). \notag
  \end{align*}

% Clearly, the functions $\tau _\Lambda$ and $\sigma _\Lambda $ are
% entire functions on $\bC ^2$. 

  \begin{prop}
    \label{mainproperty}
The  functions $\sigma _\Lambda$ and $ \tilde \sigma _\Lambda $   
 vanish on $\Lambda $. % Thus both $\sigma _\Lambda
% $  and 
% $\tilde \sigma _\Lambda $ are   sigma functions for $\Lambda $. 
  \end{prop}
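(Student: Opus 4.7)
The strategy is to fix an arbitrary $\lambda\in\Lambda$, write it via the sublattice decomposition, and then exhibit, for $\sigma_\Lambda$ (respectively $\tilde\sigma_\Lambda$), at least one factor in the defining product \eqref{defsig1} that vanishes at $\lambda$. Because $\sigma_\Lambda$ is a finite product of entire functions, a single vanishing factor forces $\sigma_\Lambda(\lambda)=0$.

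\textbf{Step 1: parametrize $\lambda$.} Since $|\gcd(a,c)|=1$, Lemma \ref{l3} lets us write every $\lambda\in\Lambda$ uniquely as
\[
\lambda = A\Bigl(B\begin{pmatrix}k\\ l\end{pmatrix} + \begin{pmatrix}0\\ \delta\end{pmatrix}\Bigr),\qquad k,l\in\mbZ,\ (0,\delta)\in\cD.
\]
Applying $(AB)^{-1}$ and using the explicit inverse
\[
B^{-1}\begin{pmatrix}0\\ \delta\end{pmatrix} = \begin{pmatrix}-c\delta/\Delta\\ a\delta/\Delta\end{pmatrix},
\]
already carried out in \eqref{eq:d3}, gives
\[
p_1\bigl((AB)^{-1}\lambda\bigr) = k-\tfrac{c\delta}{\Delta},\qquad p_2\bigl((AB)^{-1}\lambda\bigr) = l+\tfrac{a\delta}{\Delta}.
\]

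\textbf{Step 2: case analysis from the partition $\cD=\cE_0\cup\cE_1\cup\cE_2$.} If $(0,\delta)\in\cE_0$, then $\Delta/c$ divides $\delta$, so $c\delta/\Delta\in\mbZ$ and therefore $k-c\delta/\Delta\in\mbZ$. The leading factor $\sigma\bigl(p_1((AB)^{-1}z)\bigr)$ of $\sigma_\Lambda$ then vanishes at $z=\lambda$, since $\sigma$ vanishes on $\mbZ$. If instead $(0,\delta)\in\cE_1$, I pick out of the product the factor indexed by $\nu=(0,\delta)$, namely
\[
\bigl(\beta_{p_1(B^{-1}\nu)}\sigma\bigr)\bigl(p_1((AB)^{-1}z)\bigr).
\]
By the definition \eqref{eq:d9bca} of the Fock shift, $\beta_\zeta\sigma(w)$ is a nonvanishing exponential times $\sigma(w-\zeta)$, so evaluating at $z=\lambda$ produces a multiple of
\[
\sigma\Bigl(k-\tfrac{c\delta}{\Delta}-\bigl(-\tfrac{c\delta}{\Delta}\bigr)\Bigr)=\sigma(k)=0.
\]
The case $(0,\delta)\in\cE_2$ is symmetric: the factor indexed by $\nu=(0,\delta)$ in the second product evaluates to a nonzero exponential times $\sigma(l+a\delta/\Delta-a\delta/\Delta)=\sigma(l)=0$.

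\textbf{Step 3: the companion function.} The argument for $\tilde\sigma_\Lambda$ is identical after swapping the roles of the two coordinates: if $(0,\delta)$ lies in the new $\cE_0'$, i.e.\ $(\Delta/a)\mid\delta$, then $a\delta/\Delta\in\mbZ$ and the leading factor $\sigma(p_2((AB)^{-1}z))$ kills $\lambda$; otherwise one of the shifted factors indexed by $\cE_1'$ or $\cE_2'$ does the job exactly as in Step~2.

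\textbf{Main obstacle.} There is no deep obstruction; the proof is really bookkeeping. The only place where care is required is to verify that the shift parameter $p_j(B^{-1}\nu)$ in $\beta_{p_j(B^{-1}\nu)}\sigma$ exactly cancels the fractional part of $p_j((AB)^{-1}\lambda)$ when $\lambda$ lies in the coset $\nu+\Gamma$. This is guaranteed by the formula for $B^{-1}\nu$ recorded above, which is why the shifts in \eqref{defsig1} were chosen in exactly that way.
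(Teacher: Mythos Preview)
Your proof is correct and follows essentially the same approach as the paper's own proof: parametrize $\lambda$ via the coset decomposition from Lemma~\ref{l3}, compute $p_j((AB)^{-1}\lambda)$ explicitly, and then in each of the three cases $\cE_0,\cE_1,\cE_2$ exhibit the single factor of the product that lands in $\mbZ$ (using $\nu=(0,\delta)$ for the shifted factors). The paper's argument is identical in structure and detail, including the symmetric treatment of $\tilde\sigma_\Lambda$.
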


  \begin{proof}
    Let $k,l \in \mbZ $, $(0,\delta ) \in \cD $, $\nu = (0,\eta ) \in
    \cD $ and let   $\lambda = A\Big(B
\Big(\begin{smallmatrix}
  k \\ l 
\end{smallmatrix}\Big)
+
\Big(\begin{smallmatrix}
  0 \\ \delta 
\end{smallmatrix} \Big)\Big)$  be a general lattice point in $\Lambda
$. We write the coordinates  of the argument in \eqref{eq:d3}  as 
% \begin{equation}
%   \label{eq:aug1}
%   (AB)\inv \lambda =
%   \begin{pmatrix}
%     k \\ l 
%   \end{pmatrix} + B\inv
%   \begin{pmatrix}
%     0 \\ \delta 
%   \end{pmatrix} = 
% \begin{pmatrix}
% k  -c\delta /\Delta \\ l+ a\delta / \Delta 
% \end{pmatrix} \, , 
% \end{equation}
% so that 
$$
p_1\big( (AB)\inv \lambda \big) = k  -c\delta /\Delta \qquad \text{
  and } \qquad p_2\big( (AB)\inv \lambda \big) = l+ a\delta / \Delta
\, .
$$
If we omit the normalizing factors of the Fock shifts, we have 
\begin{align*}
  \beta _{p_1(B\inv \nu )}\sigma (p_1((AB)\inv \lambda ) & \asymp
  \sigma (k-\frac{c\delta}{\Delta}+\frac{c\eta}{\Delta}) \, , \\
  \beta _{p_2(B\inv \nu )}\sigma (p_2((AB)\inv \lambda ) & \asymp
  \sigma (l+\frac{a\delta}{\Delta}-\frac{a\eta}{\Delta})  \, .
\end{align*}
% where we have omitted the normalizing exponential factors occurring in
% the definition of the Fock shifts $\beta _w$. 

If $(0,\delta ) \in \cE _0$, i.e., $\tfrac{\Delta}{c} | \delta $ and
thus 
$\delta = r \tfrac{\Delta}{c}$ for some $r\in \mbZ $, then 
$$
\sigma (p_1((AB)\inv \lambda )) \asymp  \sigma (k-c\delta  / \Delta )
= \sigma (k-r) = 0 \, . 
$$
So $\sigma $ vanishes on the cosets $(0,\delta ) + \Gamma $, whenever
$\tfrac{\Delta}{c} | \delta $. 

If $(0,\delta ) \in \cE _1 $, i.e., $\tfrac{\Delta}{c} \not
| \delta $, then the factor 
$$
  \beta _{p_1(B\inv \nu )}\sigma (p_1((AB)\inv \lambda )  \asymp
  \sigma (k-\frac{c\delta}{\Delta}+\frac{c\eta}{\Delta}) 
$$
vanishes for $\nu = (0,\delta ) \in \cE _1$. Likewise, if  $(0,\delta
)\in \cE _2$, then for $\nu \in \cE _2$ 
$$
  \beta _{p_2(B\inv \nu )}\sigma (p_2((AB)\inv \lambda )  \asymp
  \sigma (l+\frac{a\delta}{\Delta}-\frac{a\eta}{\Delta}) = 0 \, . 
$$
Thus every coset $(0,\delta )^T +\Gamma $ is annihilated by a single
factor of $\sigma _\Lambda $,  and  $\sigma  _\Lambda $  vanishes on
$\Lambda $.  

The proof for $\tilde \sigma _\Lambda $ is the same. 
  \end{proof}

\subsection{Interpolating Functions}

For the  interpolating function on $\Lambda $ we make a similar
ansatz. However, we need to pay special attention to the cosets in
$\cE _0$, where $\frac{\Delta}{c} | \delta $.  We partition $\cD  = \cE _0 \cup \cE _1 \cup \cE _2$ as in
\eqref{eq:vm1}. Evaluating $p_j\big( (AB)\inv \lambda \big), j=1,2$,
on these cosets, eventually leads to the following definitions.
Let $b_1, b_2$ be the basis vectors of $\Gamma $, namely
$b_1 = AB\big(\begin{smallmatrix}
  1 \\ 0 
\end{smallmatrix}\big)$ and $ b_2 = AB
\big(\begin{smallmatrix} 
  0 \\ 1
\end{smallmatrix}\big)$, and set
\begin{equation}
  \label{eq:n4}
b_3 = b_2 + A\Big(\begin{smallmatrix}
  0 \\ \Delta / c 
\end{smallmatrix}\Big) \in \bC ^2 \text{ and } \zeta = \frac{\langle
  b_3,b_2\rangle }{\|b_2\|^2} \in \bC \, .
\end{equation}
Recall that $\tau (w) = \sigma (w)/w, w\in \bC $ is interpolating for
$\mbZ $ and  define the entire function 
\begin{align*}%label{defint}
\tau_{\Lambda } (z) &=  \notag 
 \tau\big( p_1((AB)\inv z)\big)\,  \prod _{\nu \in \cE _1}
\Big(\beta _{p_1(B\inv \nu)  }\sigma \Big)  \Big(p_1 \big( (AB)\inv z\big) \Big) \\
 & \qquad \times \, \prod _{\nu \in \cE _2} \Big(\beta _{p_2(B\inv \nu )  }\sigma 
\Big)    \Big(p_2 \big( (AB)\inv z \big) \Big) 
   \prod _{r = c\delta /\Delta \in \mbZ \setminus
   \{0\}} \beta _{r\zeta } \sigma \Big( \frac{\langle z,
   b_2\rangle }{\|b_2\|^2} \Big)\, .
\end{align*}
The construction of $\tau _\Lambda $ is more subtle,  since we need to
include a finite number of extra factors $ \beta _{r\zeta } \sigma \Big( \frac{\langle z,
  b_2\rangle }{\|b_2\|^2}\Big)$.

\begin{prop} \label{interp1}
  $\tau _\Lambda $ is an interpolating function for $\Lambda $.
\end{prop}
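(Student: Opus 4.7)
The plan is to show $\tau_\Lambda(\lambda)=0$ for every $\lambda\in\Lambda\setminus\{0\}$ and $\tau_\Lambda(0)\neq 0$ by parametrizing each $\lambda$ via Lemma~\ref{l3} as $\lambda = A\bigl(B(k,l)^T + (0,\delta)^T\bigr)$ with $k,l\in\mbZ$ and $(0,\delta)\in\cD = \cE_0\cup\cE_1\cup\cE_2$, and then evaluating each factor of $\tau_\Lambda$ directly. As in the proof of Proposition~\ref{mainproperty}, the key preliminary computation is $p_1((AB)^{-1}\lambda) = k - c\delta/\Delta$ and $p_2((AB)^{-1}\lambda) = l + a\delta/\Delta$, together with the observation that each Fock shift $\beta_{(\cdot)}$ modifies its $\sigma$-factor only by a nowhere-vanishing exponential, so vanishing questions reduce to evaluating $\sigma$ at explicit arguments.

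The argument then splits according to which of $\cE_0,\cE_1,\cE_2$ contains $(0,\delta)$. If $(0,\delta)\in\cE_1$, the first-coordinate factor indexed by $\nu = (0,\delta)$ reduces to $\sigma(k) = 0$; analogously, if $(0,\delta)\in\cE_2$, the corresponding second-coordinate factor gives $\sigma(l) = 0$. These two cases mimic the proof of Proposition~\ref{mainproperty}. In the remaining case $(0,\delta)\in\cE_0\setminus\{(0,0)\}$, write $\delta = r\Delta/c$ with $r\in\mbZ\setminus\{0\}$; the leading prefactor $\tau(k-r)$ vanishes whenever $k\neq r$ because $\tau\equiv 0$ on $\mbZ\setminus\{0\}$, leaving only the subcase $k = r$. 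This subcase is exactly what the extra factors $\beta_{r\zeta}\sigma(\langle z,b_2\rangle/\|b_2\|^2)$ are engineered to handle: using the decomposition $\lambda = rb_1 + (l-r)b_2 + rb_3$ together with the orthogonality $\langle b_1,b_2\rangle = 0$ of the chosen basis of $\Gamma$, one finds $\langle\lambda,b_2\rangle/\|b_2\|^2 = (l-r) + r\zeta$, so that after the Fock shift by $r\zeta$ the argument of $\sigma$ is the Gaussian integer $l-r$, and the factor vanishes.

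The nonvanishing $\tau_\Lambda(0)\neq 0$ is checked factor-by-factor: $\tau(0) = \lim_{w\to 0}\sigma(w)/w = 1$; for $\nu = (0,\eta)\in\cE_1$ the factor equals $\sigma(c\eta/\Delta)$ up to a nonzero exponential, and $c\eta/\Delta\notin\mbZ$ because $\eta\notin\cE_0$; the $\cE_2$-factors and $\cE_0$-extra-factors are handled in the same way, using that $\zeta\notin\mbZ$ and that the partition is arranged so that $a\eta/\Delta\notin\mbZ$ for $\eta\in\cE_2$. The main obstacle, which has no analogue in Proposition~\ref{mainproperty}, is precisely the subcase $\delta\in\cE_0\setminus\{0\}$, $k = r$: the whole role of the extra factors and of the specific definition $\zeta = \langle b_3,b_2\rangle/\|b_2\|^2$ is to force the cancellation $(l-r)+r\zeta-r\zeta = l-r\in\mbZ$, so the proof really hinges on this cancellation together with the orthogonality of the sublattice basis $b_1\perp b_2$ that makes the computation of $\langle\lambda,b_2\rangle/\|b_2\|^2$ clean.
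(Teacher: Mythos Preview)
Your argument is correct and follows the paper's proof almost verbatim: the same coset parametrization $\lambda = A(B(k,l)^T + (0,\delta)^T)$, the same case split over $\cE_0,\cE_1,\cE_2$, the same identification of a single vanishing factor in each nonzero case, and the same factor-by-factor verification that $\tau_\Lambda(0)\neq 0$. The only difference is in the subcase $k=r$: you write $\lambda = rb_1 + (l-r)b_2 + rb_3$ (which is the decomposition forced by the displayed definition $b_3 = b_2 + A(0,\Delta/c)^T$) and then invoke $\langle b_1,b_2\rangle = 0$ to reduce to $\sigma(l-r)=0$, whereas the paper writes $\lambda = lb_2 + rb_3$ directly and obtains $\sigma(l)=0$ without any orthogonality---the paper's computation tacitly uses $b_3 = b_1 + A(0,\Delta/c)^T$, so there is a small inconsistency between its definition and its proof. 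Either version goes through in the intended setting, since choosing $\Gamma$ with an orthogonal basis is precisely step~(1) of the master plan at the start of Section~3.
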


\begin{proof}
The proof is similar to the proof of Proposition~\ref{mainproperty}.   Let $(0,\delta )
\in \cD $, $\nu = (0,\eta ) \in \cD $ and   $\lambda = A\big(B
\big(\begin{smallmatrix}
  k \\ l 
\end{smallmatrix}\big)
+
\big(\begin{smallmatrix}
  0 \\ \delta 
\end{smallmatrix} \big)\big)$  be a general lattice point in $\Lambda
$.  Then as before  %in \eqref{eq:aug1} 
% \begin{equation}
%   \label{eq:aug1}
%   (AB)\inv \lambda =
%   \begin{pmatrix}
%     k \\ l 
%   \end{pmatrix} + B\inv
%   \begin{pmatrix}
%     0 \\ \delta 
%   \end{pmatrix} = 
% \begin{pmatrix}
% k  -c\delta /\Delta \\ l+ a\delta / \Delta 
% \end{pmatrix} \, , 
% \end{equation}
% so that 
% $$
% p_1\big( (AB)\inv \lambda \big) = k  -c\delta /\Delta \qquad \text{
%   and } \qquad p_2\big( (AB)\inv \lambda \big) = l+ a\delta / \Delta
% \, .
% $$
% If we omit the normalizing factors of the Fock shifts,
we have 
\begin{align*}
\tau\big (p_1((AB)\inv \lambda )\big) 
 &= \tau (k- \frac{c\delta}{\Delta}) \\
  \beta _{p_1(B\inv \nu )}\sigma (p_1((AB)\inv \lambda )) & \asymp
 \sigma (k-\frac{c\delta}{\Delta}+\frac{c\eta}{\Delta}) \\
  \beta _{p_2(B\inv \nu )}\sigma (p_2((AB)\inv \lambda )) & \asymp
  \sigma (l+ \frac{a\delta}{\Delta}-\frac{a\eta}{\Delta})  \, .\\
\end{align*}
% where we have omitted the normalizing exponential factors occurring in
% the definition of the Fock shifts $\beta _w$. 

If $\lambda = 0$ ($k=l=\delta =0$), then 
$$
\tau _\Lambda (0)  \asymp  \tau  (0)  \, \prod _{\eta \in
  p_1(\cE _1)} \sigma (\frac{c\eta }{\Delta}) \prod _{\eta \in
  p_2(\cE _2)} \sigma (\frac{-a\eta }{\Delta}) \neq 0
$$
since by definition $\tau  (0) = 1 $ and $ \frac{c\eta
}{\Delta}  \not \in \mbZ $, $\frac{-a\eta }{\Delta}\not \in \mbZ $
for $\nu \in \cE_1 \cup \cE_2$. 
% If $\lambda \in \Gamma $, then   $ \tau _\Gamma (\lambda )
% =0$  and thus $\tau _\Lambda (\lambda ) = 0$. 

% Next, let  $\lambda = A\Big(B
% \Big(\begin{smallmatrix}
%   k \\ l 
% \end{smallmatrix}\Big)
% +
% \Big(\begin{smallmatrix}
%   0 \\ \delta 
% \end{smallmatrix} \Big)\Big)$  for
If $(0,\delta ) \in \cE _1 \cup \cE
_2$,  then  % the
% product over $\cE _1$ contains the factor $  \sigma
% (k-\frac{c\delta}{\Delta}+\frac{c\eta}{\Delta})$, which vanished for
% the choice $\nu = (0,\delta) \in \cE _1$. Thus 
$\tau _\Lambda (\lambda
) = 0$ as in Proposition~\ref{mainproperty}. % Likewise for $(0,\delta ) \in \cE _2$. 

Finally,  if % $\lambda = A\Big(B
% \Big(\begin{smallmatrix}
%   k \\ l 
% \end{smallmatrix}\Big)
% +
% \Big(\begin{smallmatrix}
%   0 \\ \delta 
% \end{smallmatrix} \Big)\Big)$ for
$(0,\delta ) \in \cE _0$,  then  $\delta  = r \Delta
 /c$ for some $r \in \mbZ $. Therefore % then  $\delta  =\delta _1 + i\delta _2 = r \Delta
% /c$ for some $r= r_1+ir_2\in \mbZ $. Since $0\leq \delta _1, \delta _2
% < \Delta $ by Lemma~\ref{l3}, we have  $0\leq r_1, r_2 < |c|$.   Then
$$
\tau\big( p_1((AB)\inv z)\big) = \tau (k- \frac{c\delta}{\Delta})  =
\tau (k-r) \, 
$$
and $\tau _\Lambda (\lambda ) = 0$, unless $k=r$. 

If $k=r$ or $\delta = r\Delta /c$, then, with the notation of
\eqref{eq:n4},  %$\lambda $ is of the form 
$$ \lambda = 
 A\Big(B
\Big(\begin{smallmatrix}
  r \\ l 
\end{smallmatrix}\Big)
+
\Big(\begin{smallmatrix}
  0 \\ r\Delta/c  
\end{smallmatrix} \Big)\Big) = l AB \Big(\begin{smallmatrix}
  0 \\ 1
\end{smallmatrix} \Big) + r \Big(AB \Big(\begin{smallmatrix}
  1 \\  0  
\end{smallmatrix} \Big) + A \Big( \begin{smallmatrix}
  0 \\ \Delta/c  
\end{smallmatrix} \Big) \Big) = l b_2 + rb_3 \, .
$$

We consider the factor $ \beta _{r\zeta } \sigma \Big( \frac{\langle z,
   b_2\rangle }{\|b_2\|^2}\Big)$ of $\tau _\Lambda $ with $ \zeta = \frac{\langle
  b_3,b_2\rangle }{\|b_2\|^2}$  and evaluate at
 $\lambda $: % then $\tau _\lambda $ vanishes because of the factor 
\begin{align*}
\beta _{r\zeta } \sigma \big( \frac{\langle \lambda ,
  b_2\rangle}{\|b_2\|^2}\big) & \asymp \sigma \big(    \frac{\langle \lambda ,
  b_2\rangle}{\|b_2\|^2} - r\zeta \big) \\
&= \sigma \big(    \frac{\langle l b_2 + rb_3, b_2\rangle }{\|b_2\|^2}
- r \frac{\langle b_3,b_2\rangle}{\|b_2\|^2} \big) = \sigma (l) = 0 \, .
\end{align*}
Altogether we have shown that $\tau _\Lambda (\lambda ) = \tau
_\lambda (0) \delta _{\lambda ,0}$ and that $\tau _\Lambda (0) \neq
0$, so that $\tau _\Lambda $ is an interpolating function for $\Lambda
$.

% A
% \Big( B

% $$
% \tau _\Gamma (\lambda ) = \tau (k-c\delta /\Delta ) \tau
% (l+a\delta/\Delta ) = \tau (k-r)\tau
% (l+a\delta/\Delta )  \, .
% $$
% This is zero unless $k=r$. This means that the coset $(0,\delta
% )+\Gamma $ is killed by $\tau _\Gamma $ except for the points $\lambda
% =   A\Big(B
% \Big(\begin{smallmatrix}
%   r \\ l 
% \end{smallmatrix}\Big)
% +
% \Big(\begin{smallmatrix}
%   0 \\ r\Delta/c  
% \end{smallmatrix} \Big)\Big)$. 

% Finallly,  if $\Delta /a$ divides
% $\delta $, i.e., $\delta = \Delta s /a$ for some $s\in \mbZ $, 
% then $$
% \tau _\Gamma  (\lambda ) = \tau \big(k-\tfrac{c\delta }{\Delta }\big)
% \, \tau \big( l+\frac{a\delta}{\Delta} \big) =  \tau \big(k-\tfrac{c\delta }{\Delta }\big)
% \, \tau \big( l+s \big)
% $$
% vanishes, unless $l=-s$, i.e., unless 
% $$\lambda = A \big(B
% \begin{pmatrix}
%   k \\ -s
% \end{pmatrix}
% +
% \begin{pmatrix}
%   0\\ \Delta s/a
% \end{pmatrix} \, 
%  $$
% for finitely many $s\in \mbZ$ and arbitrary $k\in \cG $. 

% In conclusion, $\tau _\Lambda (\lambda ) = 0$ for all $\lambda \in
% \Lambda \setminus \{0\}$ that are not of the form
% $$
% \lambda = A \big(B
% \begin{pmatrix}
%   r \\ l
% \end{pmatrix}
% +
% \begin{pmatrix}
%   0\\ \Delta r/c
% \end{pmatrix} \quad \text{ or } \quad 
% \lambda = A \big(B
% \begin{pmatrix}
%   k \\ -s
% \end{pmatrix}
% +
% \begin{pmatrix}
%   0\\ \Delta s/a
% \end{pmatrix} \, 
%  $$
% for finitely many $r,s\in \mbZ $ and all $k,l\in \mbZ $. 
% To obtain an interpolating function on $\Lambda $, we still need to
% kill a finite number of lines. 
  \end{proof}
%%%%%%%%%%%%%%%%%%%%%%%%%%%%%%%%%%%%%%%
%%%%%%%%%%%%%%%%%%%%%%%%%%%%%%%%%%%%%%%%%%%%
\section{Failure of  Sampling in  Fock Space}

%%%%
 We now apply the construction of sigma-type functions to prove the
 failure of sampling for certain lattices of density $>1$. Such examples appear naturally 
 if one considers a sigma-type function of sufficiently small growth. 
 Its zero set is a "lattice" of hyperplanes, and  one can then choose
 a discrete lattice of arbitrarily large  density which belongs to these
 hyperplanes. For instance, $\sigma _0(z_1,z_2) = \sigma (z_1) \sigma
 (z_2)$ is constructed to vanish on $\mbZ ^2$, but its zero set is the
 union of the complex lines $\{k\} \times \bC $ and $\bC \times \{l\}$
 for $k,l \in \mbZ $. Consequently $\sigma _0$ vanishes on every
 lattice $\epsilon \mbZ \times  \mbZ $, which has  density $\epsilon ^{-2}>>1$.  
 
  Surprisingly this is not the only possibility of building such examples.
% To show that a lattice $\Lambda $ fails to be a set of sampling, we
% need to construct a sigma function $\sigma _\Lambda $ in $\cF ^\infty $ that
% vanishes on $\Lambda $. In some cases the construction can be carried
% out with the construction of Section~\ref{4.3}.   

\begin{tm} \label{fail}
  Let $q=q_1 + i q_2 \in \mbZ $ with $|q|\geq 2$,  $\gamma ^2 + 1/|q|^2=1$ and $$\Lambda =
  \begin{pmatrix}
    1 & \frac{1}{q} \\
 0 & \gamma 
  \end{pmatrix} \mbZ ^2 \, .
$$
Then $\Lambda $ fails to be a set of sampling for $\cF $. 
\end{tm}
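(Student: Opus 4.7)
The plan is to invoke Proposition~\ref{chargauss}(iv): to show $\Lambda$ fails to sample $\cF^2_2$, it suffices to exhibit a nonzero $F\in\cF^\infty_2$ that vanishes on $\Lambda$. I will construct such $F$ as the sigma-type function $\sigma_\Lambda$ from Section~3, choosing the sublattice and the coset partition so that the growth lands exactly at the $\cF^\infty_2$ ceiling $e^{\pi|z|^2/2}$.

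The choice of sublattice is the crux. I set $B=\begin{pmatrix}1&-1\\0&q\end{pmatrix}$, which has Gaussian integer entries, $\det B=q$, and $\gcd(a,c)=\gcd(1,-1)=1$ as required in Section~\ref{4.3}. The hypothesis $\gamma^2+1/|q|^2=1$ is tailored precisely so that
$$
AB=\begin{pmatrix}1&0\\0&q\gamma\end{pmatrix},
$$
i.e., the sublattice $\Gamma=AB\mbZ^2=\mbZ\times q\gamma\mbZ$ has an \emph{orthogonal} basis with the second generator of squared length $|q\gamma|^2=|q|^2-1$. From $(AB)^{-1}=\mathrm{diag}(1,1/(q\gamma))$ we read off $p_1((AB)^{-1}z)=z_1$, $p_2((AB)^{-1}z)=z_2/(q\gamma)$, and a direct computation gives $B^{-1}(0,\delta)^T=(\delta/q,\delta/q)^T$.

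Next I apply Lemma~\ref{l3}: since $\gcd(a,c)=1$, the set of coset representatives is $\cD=\{(0,\delta):\delta\in qQ\cap\mbZ\}$ with $|\cD|=|q|^2$. With $\Delta/c=-q$, the divisibility condition $(\Delta/c)\mid\delta$ combined with $\delta\in qQ$ forces $\delta/q\in Q\cap\mbZ=\{0\}$, so $\cE_0=\{(0,0)\}$. I then take $\cE_1=\emptyset$ and put the remaining $|q|^2-1$ cosets into $\cE_2$. Proposition~\ref{mainproperty} yields
$$
\sigma_\Lambda(z)=\sigma(z_1)\prod_{\substack{\delta\in qQ\cap\mbZ\\ \delta\neq 0}}(\beta_{\delta/q}\sigma)\!\left(\tfrac{z_2}{q\gamma}\right),
$$
a function that is not identically zero and vanishes on $\Lambda$.

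The final step is the growth budget. Since $|\sigma(w)|\leq Ce^{\pi|w|^2/2}$ globally and the Fock shift $\beta_\zeta$ preserves this bound exactly (the exponents in \eqref{eq:d9bca} telescope to $\pi|z|^2/2$), the product of the $|q|^2-1$ shifted factors is bounded by
$$
C^{|q|^2-1}\exp\!\Bigl(\tfrac{\pi(|q|^2-1)}{2}\cdot\tfrac{|z_2|^2}{|q\gamma|^2}\Bigr)=C^{|q|^2-1}e^{\pi|z_2|^2/2},
$$
where the exponents cancel because of the identity $|q\gamma|^2=|q|^2-1$. Combined with $|\sigma(z_1)|\leq Ce^{\pi|z_1|^2/2}$ this gives $|\sigma_\Lambda(z)|\leq C^{|q|^2}e^{\pi|z|^2/2}$, so $\sigma_\Lambda\in\cF^\infty_2$ and Proposition~\ref{chargauss}(iv) concludes the proof. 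The sharp point, where the argument would collapse, is the matching $|\cE_2|=|q\gamma|^2$: any loss of this identity would either overshoot the $\cF^\infty_2$ threshold or leave a coset unannihilated. The condition $\gamma^2+1/|q|^2=1$ together with $|q|\geq 2$ is exactly what makes this balance succeed while keeping $\Lambda$ nontrivially dense (density $|q|^2/(|q|^2-1)>1$).
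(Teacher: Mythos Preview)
Your proof is correct and follows the paper's strategy exactly: build $\sigma_\Lambda\in\cF^\infty_2$ via Proposition~\ref{mainproperty} from a sublattice $\Gamma=AB\,\mbZ^2$ with orthogonal basis, then apply Proposition~\ref{chargauss}(iv). The only difference is your choice of $B=\bigl(\begin{smallmatrix}1&-1\\0&q\end{smallmatrix}\bigr)$ with $\det B=q$, whereas the paper takes $B=\bigl(\begin{smallmatrix}1&-\bar q\\0&|q|^2\end{smallmatrix}\bigr)$ with $\det B=|q|^2$; both make $AB$ diagonal, but yours has index $|q|^2$ and only $|q|^2-1$ shifted sigma-factors, while the paper's has index $|q|^4$ and $|q|^4-|q|^2$ factors. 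In both cases the exponent count $|\cE_2|/|q\gamma|^2$ (respectively $(|q|^4-|q|^2)/(\gamma^2|q|^4)$) collapses to $1$ via $\gamma^2=1-1/|q|^2$, so the argument lands identically on the $\cF^\infty_2$ threshold---your version is simply the more economical realization of the same idea.
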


\begin{proof}
  According to Proposition~\ref{chargauss} it suffices  to find an $F\in
  \cF ^\infty _2 $,  
  such that $F(\lambda ) = 0$ for all $\lambda \in \Lambda $. 
 We  apply Proposition~\ref{mainproperty} to the  sublattice $\Gamma = AB\mbZ^2$ of $\Lambda $ defined
by the matrix  
$$
B =
\begin{pmatrix}
  1 & -\bar{q} \\ 0 & |q|^2
\end{pmatrix} \,  % = \begin{pmatrix}
%   1 & -(q_1 -iq_2) \\ 0 & q_1^2 + q_2^2
% \end{pmatrix} \, .
$$
with inverse 
$
B\inv =
\left ( 
\begin{smallmatrix}
  1 & 1/q \\ 0 & 1/|q|^2 
\end{smallmatrix}
\right )
$. 
Then $AB = \mathrm{diag}\, (1, \gamma |q|^2)$ and thus 
                 $\Gamma = AB (\mbZ )^2 = \mbZ \times \gamma |q|^2
                 (\mbZ )$  
 possesses  an orthogonal
basis.  Then $\Delta = \det B = |q|^2$, and 
by Lemma~\ref{l3} the cosets of $\mbZ ^2 / B\mbZ ^2$ are represented by the
set $\cD  = \{ (0, \delta _1 + i \delta _2) \in \mbZ ^2: 0\leq \delta
_1, \delta _2 < 
|q|^2\}$. 

Consequently the sigma-type  function of $\Gamma $ is 
$$
\sigma _\Gamma (z)  = (\sigma \otimes \sigma )\big( (AB)\inv z
\big) = \sigma (z_1) \sigma (\frac{z_2}{\gamma |q|^2})
\, .
$$
% We write  $\lambda \in \Lambda $ as $\lambda = A \big( B (k,l)^T +
% (0,\delta )^T\big)$ and observe that
% $$
% (AB)\inv \lambda =
% \begin{pmatrix}
%   k \\l 
% \end{pmatrix}
% + B\inv
% \begin{pmatrix}
%   0\\ \delta 
% \end{pmatrix}
% = 
% \begin{pmatrix}
%   k \\l 
% \end{pmatrix}
% +
% \begin{pmatrix}
%   \delta / q \\ \delta /|q|^2 
% \end{pmatrix} \, .
% $$
Following the recipe of \eqref{eq:vm1} and \eqref{defsig1}   we  partition  
the  coset representatives $\cD  $ into 
$$
\cE_0 = \{ (0, \delta ) \in \cD :  q |  \delta \} = \{ (0,\delta ):
\delta = q (m+in), m,n \ = 0, \dots , |q| -1 \} \, ,
$$
and $\cE _1 = \emptyset $, and     
 $\cE _2 = \cD \setminus \cE _0 =  \{ (0, \delta ) \in \cD :  q \not |
 \,  \delta \}
 $. 
 
Then $\mathrm{card}\, \cE _0  =  |q|^2$ and $\mathrm{card}\, \cE _2  =
|q|^4 - |q|^2$.  In the notation of Section~3.2 $p_2((AB)\inv z) =
z_2/(\gamma |q|^2)$ and $p_2(B\inv \nu ) = \delta /|q|^2$. Then
\eqref{defsig1} yields the explicit formula 
\begin{align}
\sigma _\Lambda (z_1,z_2) &=  \sigma (z_1) \prod
_{\delta \in \cE _2} (\beta _{\delta / |q|^2} \sigma )
\big(\frac{z_2}{\gamma |q|^2}\big) \notag \\
&= \sigma (z_1) \, \prod
_{\delta \in \cE _2} e^{\pi \bar{\delta } z_2 /|q|^2} \sigma
\big(\frac{z_2}{\gamma |q|^2}- 
\frac{\delta}{|q|^2}\big) \, e^{-\frac{\pi}{2} \frac{|\delta
    |^2}{|q|^4}} \,  \label{vn1} \,
\end{align}
By Proposition~\ref{mainproperty}, 
$\sigma _\Lambda $ vanishes on $\Lambda $.   %  This fact could be easily verified For this example this is
% even easier to verify by hand. %Let us  verify  this  claim explicitly. 

%  We write  $\lambda \in \Lambda $ as 
% $$\lambda = A \Big( B \big(
% \begin{smallmatrix}
%   k \\ l 
% \end{smallmatrix} \big) + \big(
% \begin{smallmatrix}
%   0\\\delta 
% \end{smallmatrix}\big)\Big) = \big(
% \begin{smallmatrix}
%   k \\ \gamma |q|^2 l
% \end{smallmatrix}\big) + \big(
% \begin{smallmatrix}
%   \delta / q \\ \gamma \delta 
% \end{smallmatrix}\big) \, .
% $$

% (i) If $q | \delta $, then $\delta = rq$ and so the first factor of \eqref{vn1} is
% $\sigma (p_1(\lambda )) = \sigma (k + \delta /q) = \sigma (k+r) = 0$. 

%  (ii) If $ q $ does not divide $ \delta  $, then by definition  $\delta  \in \cE _1$ and % and $p_2\big(
%        % (AB)\inv \lambda \big) = l + \delta /|q|^2$. Then
% \begin{align*}
% \sigma _\Lambda (\lambda ) & = \sigma (k+\delta  /q) \prod _{\eta
%   \in P_2(\cE _1)} \beta _{\eta / |q|^2} \sigma \big( \frac{\gamma |q|^2 l + \gamma
% \delta }{\gamma |q|^2}\big) \\
% &  \asymp  \sigma (k+\delta  /q) \prod _{\eta  \in p_2(\cE _1)} \sigma \big( \frac{\gamma |q|^2 l + \gamma
% \delta  - \gamma \eta}{\gamma |q|^2}  \big)   = 0 \, . 
% \end{align*}
%  Since $\delta  \in \cE _1$,   we have $\delta  = \eta $ for  one of
%  the factors  and then $\sigma (l)= 0$. 

Finally we need to   check  the growth of $\sigma _\Lambda $.   Since $\beta _{w_0}  $  is an isometry on $\cF
^\infty _1$ by \eqref{eq:d9bbb}, and since there are exactly
$|q|^4-|q|^2$ factors  in the product over $\cE _2$, we obtain 
\begin{align*}
|\sigma _\Lambda (z_1,z_2)| & = |\sigma (z_1)| \prod
_{\delta \in \cE _2} |(\beta _{\delta / |q|^2} \sigma )
\big(\frac{z_2}{\gamma |q|^2} \big) | \\
& \leq \exp \big(\frac{\pi }{2}
|z_1|^2 \big) \prod _{\delta \in \cE _2} \exp \big( \frac{\pi}{2}
\frac{|z_2|^2}{\gamma ^2|q|^4} \big) \\
& =  \exp \big(\frac{\pi }{2}
(|z_1|^2 + \frac{|q|^4-|q|^2}{\gamma ^2 |q|^4} |z_2|^2 ) \big) \\
& =  \exp \big(\frac{\pi }{2}
(|z_1|^2 + |z_2|^2\big) \, ,
  \end{align*}
where in the last identity we have used $1-\tfrac{1}{|q|^2} = \gamma
^2$. 
Thus $\sigma _\Lambda \in \cF ^\infty _2$ and by
Proposition~\ref{chargauss} $\Lambda $ cannot be a set of sampling for
$\cF $. 
\end{proof}

\begin{cor} \label{cormany}
Let $\Lambda ' $ be a lattice of the form
$$
\Lambda '=
\begin{pmatrix}
  \alpha  &0 \\ 0 & \beta 
\end{pmatrix}
\begin{pmatrix}
  1 & \frac{1}{q} \\ 0 & \gamma
\end{pmatrix} \mbZ^2
$$
with $\alpha ,\beta \geq 1 $ and $\gamma ^2 + \tfrac{1}{q^2}=
1$. Then $\Lambda ' $ is not a set of sampling for $\cF $.   
\end{cor}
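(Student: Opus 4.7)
The plan is to reduce the corollary directly to Theorem~\ref{fail} by a single rescaling of the sigma-type function constructed there. Writing $D = \operatorname{diag}(\alpha,\beta)$ and $A = \Big(\begin{smallmatrix} 1 & 1/q \\ 0 & \gamma \end{smallmatrix}\Big)$, the lattice in question is $\Lambda' = D\Lambda$, where $\Lambda = A\mbZ^2$ is exactly the lattice treated in Theorem~\ref{fail}. By the equivalence (ii)$\Leftrightarrow$(iv) of Proposition~\ref{chargauss}, it suffices to exhibit a nonzero $F \in \cF^\infty_2$ that vanishes on $\Lambda'$.

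To build $F$, I would take the sigma-type function $\sigma_\Lambda$ produced in the proof of Theorem~\ref{fail} and set $F(z) := \sigma_\Lambda(D^{-1}z)$. Since $\sigma_\Lambda$ vanishes on $\Lambda = D^{-1}\Lambda'$, the function $F$ vanishes on $\Lambda'$, and it is clearly not identically zero.

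The key step is the growth estimate. The proof of Theorem~\ref{fail} established $|\sigma_\Lambda(w)| \leq \exp\bigl(\tfrac{\pi}{2}|w|^2\bigr)$ for every $w \in \bC^2$. Substituting $w = D^{-1}z = (z_1/\alpha,\, z_2/\beta)$ and using the hypothesis $\alpha,\beta \geq 1$ gives $|F(z)| \leq \exp\bigl(\tfrac{\pi}{2}(|z_1|^2/\alpha^2 + |z_2|^2/\beta^2)\bigr) \leq \exp\bigl(\tfrac{\pi}{2}|z|^2\bigr)$, so $F \in \cF^\infty_2$. Proposition~\ref{chargauss} then yields the claim.

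There is no serious obstacle in this argument; the entire content of the corollary is that the hypothesis $\alpha,\beta \geq 1$ ensures $D^{-1}$ is a coordinatewise contraction, which preserves the Fock-space growth rate. If either $\alpha$ or $\beta$ were smaller than $1$, the rescaling would overshoot the $\cF^\infty_2$ threshold and this direct route would fail, so the hypothesis is sharp for this proof strategy.
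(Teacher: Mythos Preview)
Your proof is correct and follows essentially the same route as the paper: write $\Lambda' = D\Lambda$ with $D=\mathrm{diag}(\alpha,\beta)$, rescale the $\cF^\infty_2$ function from Theorem~\ref{fail} by $D^{-1}$, and use $\alpha,\beta\geq 1$ to keep the growth bound. The only cosmetic difference is that the paper invokes an abstract nonzero $F\in\cF^\infty_2$ vanishing on $\Lambda$ (whose existence is guaranteed by Theorem~\ref{fail}) rather than naming $\sigma_\Lambda$ explicitly, but this is the same argument.
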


\begin{proof}
  %  If $\Lambda = A  (\cG )^d$ is not a set of sampling for $\cF $ and
  % $A' = DA$ for a diagonal matrix $D= \mathrm{diag}\, (r_1, \dots ,
  % r_d)$ with $|r_j| \geq 1$, then $\Lambda ' = A' (\cG )^d$ is not a
  % set of sampling.  
Let $D = \mathrm{diag} (\alpha, \beta )$, then $\Lambda '= D \Lambda $
with $\Lambda $ as in Theorem~\ref{fail}. 
  
 According to Theorem~\ref{fail}  there exists a non-zero  function $F \in
\cF ^\infty_2 $, such that $F(\lambda ) = 0$ for all $\lambda \in
\Lambda $. Set $\tilde{F}(z) = F(D\inv z)$.
If  $\lambda ' \in \Lambda '$, i.e. $\lambda '  = D\lambda $ for
some $\lambda \in \Lambda $, we have $\tilde F (\lambda ') = F(D\inv D
\lambda ) = F(\lambda ) = 0$ and 
$$
|\tilde F (z) | = |F(D\inv  z)| \leq C e^{\frac{\pi }{2} |D\inv z|^2}
\leq C e^{\frac{\pi }{2} | z|^2} \, .
$$
So $\tilde F \in \cF ^\infty _2 $ and $\tilde F $ vanishes on $\Lambda
'$. Therefore Proposition~\ref{chargauss} implies that  $\Lambda '$
fails to be a set of sampling for $\cF $.  
\end{proof}

\begin{cor} \label{corun}
  Let $p,q\in \bN$, $q\geq 2$,  $\gamma ^2 + 1/q^2=1$ and $$\Lambda =
  \begin{pmatrix}
    1 & \frac{p}{q} \\
 0 & \gamma 
  \end{pmatrix} \mbZ^2 \, .
$$
Then $\Lambda $ fails to be a set of sampling for $\cF $.   
\end{cor}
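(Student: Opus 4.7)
The plan is to mirror the proof of Theorem~\ref{fail}, adapting the sublattice matrix $B$ to absorb the numerator $p$. Write $g = \gcd(p,q) \in \bZ$, set $p' = p/g$ and $q' = q/g$, so that $p/q = p'/q'$ with $\gcd(p',q') = 1$ and $q' \mid q$. The natural choice of sublattice matrix is
$$
B = \begin{pmatrix} 1 & -p' \\ 0 & q' \end{pmatrix},
$$
designed so that $AB = \mathrm{diag}(1, \gamma q')$ is diagonal. The sublattice $\Gamma = AB\mbZ^2 = \mbZ \times \gamma q' \mbZ$ then has an orthogonal basis, and the machinery of Section~\ref{4.3} applies directly. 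This is the analogue of the choice $B = \bigl(\begin{smallmatrix} 1 & -\bar q \\ 0 & |q|^2 \end{smallmatrix}\bigr)$ in Theorem~\ref{fail}; the essential new feature is that the reduced denominator $q'$ (not $q$) appears in the diagonal.

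Since $B$ has real-integer entries with $\gcd(1, -p') = 1$, the last clause of Lemma~\ref{l3} supplies $q'^2$ coset representatives $(0,\delta) \in \cD$ with $0 \leq \mathrm{Re}\,\delta,\ \mathrm{Im}\,\delta < q'$. Inspecting~\eqref{eq:d3}, each of the two factors of $\sigma_\Gamma$ evaluated on a coset vanishes only when $q' \mid \delta$ in $\mbZ$ (using $\gcd(p',q')=1$), which in the fundamental region forces $\delta = 0$. I would therefore take the partition $\cD = \cE_0 \cup \cE_2$ with $\cE_0 = \{(0,0)\}$, $\cE_1 = \emptyset$, and $\cE_2 = \cD \setminus \cE_0$, and use~\eqref{defsig1} to define
$$
\sigma_\Lambda(z_1, z_2) = \sigma(z_1) \prod_{(0,\delta) \in \cE_2} \bigl(\beta_{\delta/q'}\sigma\bigr)\bigl(z_2/(\gamma q')\bigr).
$$
Proposition~\ref{mainproperty} then guarantees that $\sigma_\Lambda$ vanishes on $\Lambda$.

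The main obstacle, and essentially the only step that is not bookkeeping, is the growth estimate. By~\eqref{eq:d9bbb} each Fock-shifted factor is of order $\exp\bigl(\tfrac{\pi}{2}|z_2|^2/(\gamma^2 q'^2)\bigr)$, and there are $|\cE_2| = q'^2 - 1$ such factors. Collecting exponents gives
$$
|\sigma_\Lambda(z_1, z_2)| \leq C \exp\Bigl(\tfrac{\pi}{2}|z_1|^2 + \tfrac{\pi(q'^2 - 1)}{2\gamma^2 q'^2}|z_2|^2\Bigr).
$$
The point is that the coefficient of $|z_2|^2$ is at most $\tfrac{\pi}{2}$ precisely when $(1 - \gamma^2) q'^2 \leq 1$, which via $1 - \gamma^2 = 1/q^2$ reduces to $q'^2 \leq q^2$; this is automatic from $q' \mid q$. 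Hence $\sigma_\Lambda \in \cF^\infty_2$, and Proposition~\ref{chargauss}(iv) forces $\Lambda$ to fail the sampling property. The degenerate case $q' = 1$ (i.e., $q \mid p$) collapses to $\sigma_\Lambda(z) = \sigma(z_1)$ with $\cE_2 = \emptyset$ and requires no separate treatment.
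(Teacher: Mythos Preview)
Your proof is correct and follows essentially the same strategy as the paper's. The only difference is that you first reduce $p/q$ to lowest terms and work with the reduced denominator $q' = q/\gcd(p,q)$, whereas the paper simply takes $B = \bigl(\begin{smallmatrix} 1 & -p \\ 0 & q \end{smallmatrix}\bigr)$ with the full $q$; both choices make $AB$ diagonal, and the growth estimate goes through in either case (with equality $(q^2-1)/(\gamma^2 q^2) = 1$ in the paper's version versus the possibly strict inequality $q'^2/q^2 \le 1$ in yours).
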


\begin{proof}
  The proof is almost  the same as of Theorem~\ref{fail}. We  choose
  the sublattice $\Gamma $ determined by the matrix%  with an orthogonal basis. In this
  % case the matrix $B$  the sublattice is given by 
$$B=
\begin{pmatrix}
  1 & -p \\ 0 & q
\end{pmatrix} \, .
$$
Then  $\Gamma = AB \mbZ ^2 = \Big(
\begin{smallmatrix}
  1 & 0 \\ 0 & \gamma q
\end{smallmatrix}\Big) \mbZ ^2$ has an orthogonal basis, and  $B\inv  = \Big(
\begin{smallmatrix}
  1 & \tfrac{p}{q} \\ 0 & \tfrac{1}{q} 
\end{smallmatrix}\Big)$ and $\cD = \{ (0,\delta _1+i\delta _2): 0\leq
\delta _j <q \}$. The sigma function is 
$$
\sigma _\Lambda (z_1,z_2) = \sigma (z_1) \prod _{\eta \in \cD
  \setminus \{ 0\}} \beta _{\eta /q} \sigma \big(\frac{z_2}{\gamma
q}\big)
$$
with growth 
$$
|\sigma _\Lambda (z) | \leq e^{\tfrac{\pi }{2} |z_1|^2} \,
e^{\tfrac{\pi }{2} (q^2-1) \frac{|z_2|^2}{\gamma ^2q^2}} =
e^{\tfrac{\pi }{2} |z|^2} \, ,$$
 because $1  - q^{-2} = \gamma ^2$. 
\end{proof}

One may wonder how we chose the sublattice $\Gamma $. Our guiding
principle was to find a sublattice with an orthogonal basis because in
this case the norm $\|(AB)\inv \|_{\mathrm{op}}$ is minimized.  We
do not know  how to make Theorem~\ref{fail} work in greater
generality.

Consider the one-parameter family of matrices $A_t=\big(
\begin{smallmatrix}
  1 & \frac{2}{5} \\ 0 & t 
\end{smallmatrix}\big) $ with $t^2+(2/5)^2\geq 1$ (as we assume a
reduced basis).  Corollary~\ref{corun} and \ref{cormany} say
that the lattices $\Lambda _t
= A _t \mbZ ^2$  fail to be  sampling  for $t \geq  \sqrt{24}/5$, but
we do not know what happens for $\sqrt{21}/5\leq   t <  \sqrt{24}/5$.

\section{A Weak  Sampling Formula}

Theorem~\ref{fail} and Corollaries~\ref{cormany} and~\ref{corun} show
that many lattices 
with density $>1$ fail to be sampling sets for $\cF _2^2$.
By contrast, we have not succeeded to use the interpolating functions
of Proposition~\ref{interp1} to prove positive results about sampling
lattices.  
Nevertheless, we can prove a slightly weaker sampling theorem which holds even for dimensions $d>2$.

This section is a symbiosis of complex analysis (construction of
interpolating functions) and Gabor analysis (duality theory of Gabor
frames). In fact, we translate a weak reconstruction formula of
Feichtinger and Zimmermann~\cite{FZ98} into a Lagrange-type
reconstruction for entire functions.

For the discussion we need some more background from Gabor
analysis. The modulation space $M^1(\rd )$ is the subspace  of $\lrd $
for  which  the norm 
$$\|h\|_{M^1} = \int _{\rdd}  |\langle h, \pi _z \phi \rangle |\, dz$$
is finite, where   $\phi $ is the normalized Gaussian. %   a fixed non-zero test function in the
% Schwartz class.
Its dual is the space of tempered distributions such
that $\sup _{z\in \rdd } |\langle h, \pi _z \phi \rangle| < \infty $. Then
 $(M^1)^* = M^{\infty }$, and the duality  is given  via the Bargmann
transform as 
$$
\langle h,k\rangle _{M^1 \times M^{\infty }} = \int_{\cd } Bh(z)
\overline{Bk(z)} \, e^{-\pi |z|^2} \, dz  = \langle Bh, Bk\rangle \, .
$$
  Introducing the  Fock
space  $\cF ^1_d$ consisting of all entire functions with finite norm 
$$
\|F\|_{\cF ^1_d} = \int _{\cd } |F(z)| \, e^{-\pi  |z|^2/2} \,
dz  <\infty \, , 
$$
we can identify  $M^1$ and $M^\infty $ as  the pre-images of $\cF ^1_d$
and $\cF ^\infty _d$ of the Bargmann transform~\cite{G2}.

The detailed analysis of the duality theory of Gabor frames led
Feichtinger and Zimmermann~\cite{FZ98} to  the theory of
weak dual pairs.  We will
apply the following version of the duality
theory~\cite[Thm.~3.5.12]{FZ98}. 

\begin{prop}[Weak duality] \label{weakfz}
Let $\Lambda \subseteq \rdd $ be a
lattice with adjoint lattice $\Lambda ^\circ$,  $g\in M^1(\rd ) $ and $\gamma \in
M^\infty (\rd )$. Then the following are equivalent:

(i) Biorthogonality on the adjoint lattice: 
\begin{equation}
  \label{eq:vm8}
  \tfrac{1}{\mathrm{vol}\, (\Lambda )} \langle \gamma , \pi_\mu
g\rangle = \delta _{\mu ,0} \text{ for all } \mu \in \Lambda ^\circ  \, .
\end{equation}

(ii) For every $f,h\in M^1(\rd )$ we have 
\begin{equation}
  \label{eq:vn46}
  \langle f,h\rangle = \sum _{\lambda \in \Lambda } \langle f, \pi
  _\lambda g \rangle \langle \pi _\lambda \gamma , h\rangle
\end{equation}
with absolute convergence of the sum. 
\end{prop}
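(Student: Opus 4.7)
The plan is to reduce both directions to a single Janssen-type identity. Under the hypotheses $g \in M^1$, $\gamma \in M^\infty$ and $f,h \in M^1(\rd)$, one has the Fundamental Identity of Gabor Analysis
\begin{equation}\label{plan:figa}
\sum_{\lambda \in \Lambda} \langle f, \pi_\lambda g\rangle \langle \pi_\lambda \gamma, h\rangle
= \frac{1}{\mathrm{vol}(\Lambda)} \sum_{\mu \in \Lambda^\circ} \langle \gamma, \pi_\mu g\rangle \langle \pi_\mu f, h\rangle,
\end{equation}
with absolute convergence on both sides. I would first establish \eqref{plan:figa} for Schwartz functions by applying Poisson summation on the lattice $\Lambda \subseteq \rdd$ with dual $\Lambda^\circ$ to the symplectic Fourier transform of the map $z \mapsto V_g h(z)\, \overline{V_\gamma f(z)}$, where $V_g h(z) = \langle h, \pi_z g\rangle$ denotes the short-time Fourier transform. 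Extension to $(g,\gamma,f,h) \in M^1 \times M^\infty \times M^1 \times M^1$ then follows from density of the Schwartz class together with the amalgam-space estimates $V_g h \in L^1(\rdd)$ for $h \in M^1$ and $V_\gamma f \in L^\infty(\rdd)$ for $\gamma \in M^\infty$; these also deliver the absolute convergence of both series and continuity in the $M^1$ topology of $f$ and $h$.

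Given \eqref{plan:figa}, the direction $(\mathrm{i}) \Rightarrow (\mathrm{ii})$ is immediate: biorthogonality \eqref{eq:vm8} collapses the right-hand side of \eqref{plan:figa} to its $\mu=0$ term, which equals $\langle f, h\rangle$, proving \eqref{eq:vn46}. Conversely, subtracting \eqref{eq:vn46} from \eqref{plan:figa} yields
$$\sum_{\mu \in \Lambda^\circ} \bigl(\langle \gamma, \pi_\mu g\rangle - \mathrm{vol}(\Lambda)\,\delta_{\mu,0}\bigr)\, \langle \pi_\mu f, h\rangle = 0 \quad \text{for all } f,h \in M^1.$$
Fixing a target $\mu_0 \in \Lambda^\circ$ and choosing test pairs whose cross-STFT $V_f h$ is sharply localized near $\mu_0$ --- for instance suitable Gaussian time-frequency shifts, which lie in $M^1$ --- isolates the coefficient indexed by $\mu_0$ and forces $\langle \gamma, \pi_{\mu_0} g\rangle = \mathrm{vol}(\Lambda)\,\delta_{\mu_0,0}$, which is \eqref{eq:vm8}.

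The main obstacle is the analytic bookkeeping in the first step: one has to rigorously justify Poisson summation over an arbitrary lattice in $\rdd$ applied to the product $V_g h \cdot \overline{V_\gamma f}$, and verify that the resulting double sum is absolutely convergent and jointly continuous with respect to the $M^1$ norms of the test inputs. The amalgam-space integrability of $V_g h \cdot \overline{V_\gamma f}$ when $g,h \in M^1$ and $\gamma \in M^\infty$ is precisely the technical content for which $M^1$ was designed; once this convergence framework is in place, both implications reduce to matching coefficients against a dense family of test functions.
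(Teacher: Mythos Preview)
The paper does not actually prove this proposition: it is quoted verbatim from Feichtinger and Zimmermann (the line preceding the statement reads ``We will apply the following version of the duality theory~[FZ98, Thm.~3.5.12]''), so there is no in-paper proof to compare against. Your proof plan is the standard one and is essentially correct: the Fundamental Identity of Gabor Analysis (Janssen's representation) is precisely the bridge between the two conditions, and under the hypotheses $g\in M^1$, $\gamma\in M^\infty$, $f,h\in M^1$ the absolute convergence and Poisson-summation justification go through via the Wiener amalgam estimates for the STFT, as you indicate.

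One minor point on the direction $(\mathrm{ii})\Rightarrow(\mathrm{i})$: your localization argument is a bit informal. A cleaner route is to observe that after inserting FIGA into \eqref{eq:vn46} you obtain
\[
\sum_{\mu\in\Lambda^\circ}\Bigl(\tfrac{1}{\mathrm{vol}(\Lambda)}\langle\gamma,\pi_\mu g\rangle-\delta_{\mu,0}\Bigr)\,V_h f(\mu)=0
\]
for all $f,h\in M^1$; since $V_h f$ with $f,h$ Gaussians (or more generally $f,h\in\mathcal{S}$) is dense in a suitable space of test functions on $\rdd$ and the coefficient sequence lies in $\ell^\infty(\Lambda^\circ)$, linear independence of the point evaluations forces each coefficient to vanish. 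Alternatively, one invokes the known linear independence of time-frequency shifts $\{\pi_\mu\}_{\mu\in\Lambda^\circ}$ as operators from $M^1$ to $M^\infty$. Either way the conclusion follows without needing an explicit ``sharply localized'' construction.
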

The identity  \eqref{eq:vn46}
can be interpreted as a reconstruction formula
$$
f = \sum _{\lambda \in \Lambda } \langle f, \pi
  _\lambda g \rangle \pi _\lambda \gamma
$$
for $f\in M^1$, but with  convergence in the weak$^*$-topology  on $M^\infty$.

By applying the Bargmann transform,  Proposition~\ref{weakfz} with
$g(x) =  2^{d/4} e^{-\pi |x|^2}$ is translated  into 
 the following Lagrange interpolation formula for $\cF ^1_d$. 

 \begin{cor} \label{dualentire}
Let $\Lambda \subseteq \cd $ be a complex 
lattice with adjoint lattice $\Lambda ^\circ \subseteq \cd$  and $\tau
\in
\cF ^\infty _d$. Then the following are equivalent:

(i) $\tau $ is interpolating on $\duall $.

(ii) For all $F\in \cF ^1_d$
\begin{equation}
  \label{eq:m8}
  F(z) = \sum _{\lambda \in \Lambda } F(\lambda ) e^{\pi \bar{\lambda
    } \cdot z} \tau(z-\lambda ) \, e^{-\pi |\lambda |^2} \, . 
\end{equation}
The series expansion converges weakly in the sense that for all $H\in
\cF ^1_d$
$$
\int _{\cd }  F(z) \overline{H(z)} e^{-\pi |z|^2}  \, dz = \sum _{\lambda \in
  \Lambda } F(\lambda ) e^{-\pi |\lambda |^2} \,  \int _{\cd } e^{\pi \bar{\lambda
    } \cdot z} \tau(z-\lambda )   \overline{H(z)} e^{-\pi |z|^2}  \,
  dz \, .
  $$
 \end{cor}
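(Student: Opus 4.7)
The plan is to translate Proposition~\ref{weakfz} through the Bargmann transform. Two ingredients already recorded in the paper make this almost automatic: first, $B$ restricts to isomorphisms $M^1 \to \cF^1_d$ and $M^\infty \to \cF^\infty_d$, and the $(M^1, M^\infty)$ duality corresponds to the Fock-space pairing $(F,H) \mapsto \int_{\cd} F(z)\overline{H(z)}\, e^{-\pi|z|^2}\,dz$; second, $B$ intertwines the time-frequency shifts with the Fock shifts $\beta_\lambda$ of \eqref{eq:d9bca}, i.e.\ $B\pi_\lambda = \beta_\lambda B$, as encoded in \eqref{eq:n11}.

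First I would fix $g(x) = 2^{d/4} e^{-\pi|x|^2} \in M^1$, so that $Bg \equiv 1$ on $\cd$. Given $\tau \in \cF^\infty_d$, let $\gamma \in M^\infty$ be its Bargmann pre-image (up to an overall volume normalization absorbed into $\tau$). A direct computation then gives
\[
\langle \gamma, \pi_\mu g\rangle = \langle \tau, \beta_\mu 1\rangle = \int_{\cd} \tau(z)\, e^{\pi \mu \cdot \bar z - \pi|\mu|^2/2}\, e^{-\pi|z|^2}\,dz = e^{-\pi|\mu|^2/2}\, \tau(\mu),
\]
by the reproducing-kernel identity, which extends to the $(\cF^\infty_d, \cF^1_d)$ pairing because the kernel $z \mapsto e^{\pi \bar\mu \cdot z}$ lies in $\cF^1_d$. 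Hence, after the appropriate normalization of $\tau$, the Wexler--Raz biorthogonality \eqref{eq:vm8} in Proposition~\ref{weakfz}(i) reads exactly $\tau(\mu) = \delta_{\mu,0}$ for $\mu \in \Lambda^\circ$, which is condition (i) of the corollary.

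Next I would translate the weak reconstruction formula \eqref{eq:vn46}. The same intertwining yields
\[
\langle f, \pi_\lambda g\rangle = e^{-\pi|\lambda|^2/2} F(\lambda), \qquad \langle \pi_\lambda \gamma, h\rangle = e^{-\pi|\lambda|^2/2} \int_{\cd} e^{\pi \bar\lambda\cdot z}\, \tau(z-\lambda)\, \overline{H(z)}\, e^{-\pi|z|^2}\,dz,
\]
with $F=Bf$ and $H=Bh$. Substituting into \eqref{eq:vn46} and using that $B$ identifies the $L^2$ and the Fock inner products produces the weak identity stated after \eqref{eq:m8}, the two half-exponent factors combining into the $e^{-\pi|\lambda|^2}$ that appears there. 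Since every $F, H \in \cF^1_d$ arises from some $f, h \in M^1$, the equivalence (i) $\Leftrightarrow$ (ii) follows in both directions.

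The main obstacle is the bookkeeping of constants: the $2^{d/4}$ in the Bargmann normalization of $\phi$, the covolume $\mathrm{vol}(\Lambda)$ appearing in \eqref{eq:vm8}, and the $e^{-\pi|\lambda|^2/2}$ factors produced by each application of $\beta_\lambda$ must be tracked consistently so that the final exponent is $e^{-\pi|\lambda|^2}$ rather than $e^{-\pi|\lambda|^2/2}$. A secondary point is the mode of convergence, but once $B$ is known to intertwine the $(M^1, M^\infty)$ and $(\cF^1_d, \cF^\infty_d)$ dualities, the weak$^*$ convergence transfers automatically to the Fock side.
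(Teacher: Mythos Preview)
Your proposal is correct and follows exactly the same route as the paper's proof: transport Proposition~\ref{weakfz} through the Bargmann transform, using that $B$ is an isomorphism $M^1\to\cF^1_d$ and $M^\infty\to\cF^\infty_d$, that the $(M^1,M^\infty)$ pairing becomes the Fock pairing, and that $\langle f,\pi_\lambda\phi\rangle = Bf(\lambda)e^{-\pi|\lambda|^2/2}$. You simply give more detail than the paper, which compresses the entire argument into a single sentence quoting these three facts; in particular your explicit verification that the two $e^{-\pi|\lambda|^2/2}$ factors from the sampling side and the reconstruction side combine to the $e^{-\pi|\lambda|^2}$ in \eqref{eq:m8}, and your remark that the volume normalization is absorbed into $\tau$, are exactly the bookkeeping the paper leaves to the reader.
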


 \begin{proof}
 This follows from the properties of the Bargmann transform: $\langle
 f,h \rangle = \int _{\cd } Bf(z)  \overline{Bh(z)} e^{-\pi |z|^2}  \,
 dz $, and $\langle f, \pi _\lambda \phi \rangle  = Bf(\lambda )
 e^{-\pi |\lambda |^2/2}$ and the fact that the Bargmann transform is
 an isomorphism from $M^1$ \emph{onto} $\cF ^1_d$ and from $M^\infty $
 onto $\cF ^\infty _d$.  
 \end{proof}

 Combining Corollary~\ref{dualentire} with the construction of interpolating functions we 
 obtain the following Lagrange interpolation formula. 
\begin{tm} \label{weaksamp}
Let $\Lambda = S \mbZ ^d$ where $S$ is  an upper triangular matrix
with diagonal $(\gamma _1, \dots , \gamma _d)$ as in \eqref{eq:r1}.
Assume that $\max _{j=1, \dots , d} \gamma _j = 1$.

(i) Then there exists $\gamma \in M^\infty $ such that 
$\tfrac{1}{\mathrm{vol}\, (\Lambda )} \langle \gamma , \pi_\mu
g\rangle = \delta _{\mu ,0}$  for all $ \mu \in \Lambda ^\circ $ and
the weak reconstruction formula \eqref{eq:vn46} holds.

(ii) Equivalently, there exists an interpolating function  $\tau \in
\cF ^\infty _d$ for
the adjoint lattice $\duall $,  such that the weak Lagrange
interpolation formula \eqref{eq:m8} holds.  In particular, $\Lambda $
is a set of uniqueness for $\cF ^1_d$. 
\end{tm}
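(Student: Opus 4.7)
The plan is to prove (ii), from which (i) follows via the Bargmann transform by the same argument that underlies Corollary~\ref{dualentire}. Thus the task reduces to exhibiting an interpolating function in $\cF^\infty_d$ for the adjoint lattice $\duall = (S^*)^{-1}\mbZ^d$. Motivated by the proof sketch of Proposition~\ref{known}, my candidate is the simple tensor product
$$\tau(z) = \prod_{j=1}^d \frac{\sigma(\gamma_j z_j)}{\gamma_j z_j},$$
a product of one-dimensional Weierstrass interpolating functions rescaled by the $\gamma_j$.

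To verify that $\tau$ interpolates on $\duall$, I would exploit the fact that $(S^*)^{-1}$ is lower triangular with diagonal $(1/\gamma_1, \ldots, 1/\gamma_d)$. Writing $\mu = (S^*)^{-1} n$ with $n \in \mbZ^d$, one computes
$$\gamma_j \mu_j = n_j + \gamma_j \sum_{k<j} ((S^*)^{-1})_{jk} n_k.$$
If $n \neq 0$, let $j^\ast = \min\{j : n_j \neq 0\}$; then $n_k = 0$ for all $k < j^\ast$, so $\gamma_{j^\ast}\mu_{j^\ast} = n_{j^\ast} \in \mbZ \setminus \{0\}$, which forces $\sigma(\gamma_{j^\ast}\mu_{j^\ast}) = 0$, and hence $\tau(\mu) = 0$; meanwhile $\tau(0) = 1$. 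For the growth, \eqref{hayman} together with the definition of the Weierstrass $\tau$ gives $|\sigma(w)/w| \leq C e^{\pi|w|^2/2}$, so
$$|\tau(z)| \leq C^d \prod_{j=1}^d e^{\pi \gamma_j^2 |z_j|^2/2} \leq C^d e^{\pi (\max_j \gamma_j)^2 |z|^2/2} = C^d e^{\pi|z|^2/2},$$
using the hypothesis $\max_j \gamma_j = 1$. Hence $\tau \in \cF^\infty_d$, and Corollary~\ref{dualentire} immediately yields the weak Lagrange formula \eqref{eq:m8}, proving (ii). Part (i) then transfers through the Bargmann isomorphisms $M^1 \simeq \cF^1_d$ and $M^\infty \simeq \cF^\infty_d$ as in the proof of Corollary~\ref{dualentire}.

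For the uniqueness claim, if $F \in \cF^1_d$ vanishes on $\Lambda$, then the weak form of \eqref{eq:m8} gives $\int_{\cd} F \bar{H} e^{-\pi|z|^2}\,dz = 0$ for every $H \in \cF^1_d$; taking $H = F$ (permissible since $\cF^1_d \subseteq \cF^2_d$) yields $\|F\|_{\cF^2_d}^2 = 0$, so $F \equiv 0$. The main obstacle is the interpolation verification itself: it is a small miracle that the tensor product $\tau$ suffices even when $S$ is only upper triangular rather than diagonal, avoiding the elaborate coset-based construction of Section~3. The reason is precisely the lower triangular structure of $(S^*)^{-1}$, which lets the first nonzero coordinate of $n$ automatically annihilate the product. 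Once $\tau$ is in hand, everything else rides on Proposition~\ref{weakfz} and Corollary~\ref{dualentire}.
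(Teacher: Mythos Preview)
Your proof is correct and follows essentially the same approach as the paper: the same tensor-product interpolating function $\tau(z)=\prod_j \sigma(\gamma_j z_j)/(\gamma_j z_j)$ (the paper divides by $z_j$ rather than $\gamma_j z_j$, an irrelevant constant), the same growth estimate via \eqref{hayman}, and the same appeal to Corollary~\ref{dualentire}/Proposition~\ref{weakfz}. Where the paper simply cites \cite{G1} for the interpolation property on $\duall$, you supply the explicit verification using the lower-triangular structure of $(S^*)^{-1}$ and the first nonzero coordinate $j^\ast$; you also spell out the uniqueness argument for $\cF^1_d$, which the paper leaves implicit.
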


\begin{proof}
  The adjoint lattice of $\Lambda $  is $(S\inv )^* \mbZ ^d$, and the diagonal of
  $(S\inv )^*$ is $(\gamma _1\inv , \dots , \gamma _d \inv )$. Then
  the entire function
  $$
\tau (z) = \tau (z_1,z_2, \dots , z_d)= \prod _{j=1}^d \frac {\sigma(\gamma_j z_j)}{z_j}
$$
is interpolating for $\duall $. This was already proved in
\cite{G1}. Furthermore, by the growth estimate for the Weierstrass
sigma-function \eqref{hayman} we have
$\big|\frac{\sigma(\gamma_j z_j)}{z_j}\big| \leq C e^{\pi \gamma _j^2
  |z_j|^2 / 2}$. Consequently,
$$
|\tau (z) | \leq C' e^{\pi (\max \gamma _j^2 )\, |z|^2/2} \, .
$$
Consequently, $\tau \in \cF ^\infty _d$. However, since $\gamma _{l}
= 1$ for some $l$, we have $\tau \not \in \cF ^2_d$.
We now choose $\gamma \in M^\infty $, so that $B\gamma = \tau $. The
statement now follows from the assertion of Proposition~\ref{weakfz}
and Corollary~\ref{dualentire}. 
\end{proof}

Obviously our results are far from complete and  should be considered  a
 collection of expected and of  surprising  examples. At this time we do
 not even understand the sampling property of the class of lattices
 $\Lambda = A \mbZ ^2$ with $A=\big(
 \begin{smallmatrix}
   1 & \beta \\ 0 & \gamma 
 \end{smallmatrix}\big) $ and $|\beta |^2 + \gamma ^2 = 1$. As we have seen,
 for certain
 values of $\beta $, $\beta = 1/q, q\in \mbZ , |q|\geq 2$,  the lattice
 $\Lambda $ is not sampling, but nothing else is known.

%\section{Application to Gabor Frames}

 \bibliographystyle{abbrv}
 \bibliography{general,new}

\def\cprime{$'$} \def\cprime{$'$} \def\cprime{$'$} \def\cprime{$'$}
  \def\cprime{$'$} \def\cprime{$'$}

\end{document}